\newcommand{\wh}{\widehat}
\newcommand{\indicator}[1]{\mathbbm{1}_{ {#1}  }}
\newcommand{\F}{\mathcal F}
\newcommand{\ci}[1]{_{{}_{\scriptstyle{#1}}}}
\newcommand{\Be}{\begin{equation}}
\newcommand{\Ee}{\end{equation}}
\newcommand{\Bm}{\begin{multline}}
\newcommand{\Em}{\end{multline}}
\newcommand{\Bea}{\begin{eqnarray}}
\newcommand{\Eea}{\end{eqnarray}}
\newcommand{\Beas}{\begin{eqnarray*}}
\newcommand{\Eeas}{\end{eqnarray*}}
\newcommand{\Benu}{\begin{enumerate}}
\newcommand{\Eenu}{\end{enumerate}}
\newcommand{\Bi}{\begin{itemize}}
\newcommand{\Ei}{\end{itemize}}
\def\fz{{z}}
\def\dz{{\frak z}}
\def\gga{\Gamma}
\def\ccV{{V}}
\def\intslash{\rlap{\kern  .32em $\mspace {.5mu}\backslash$ }\int}
\def\qsl{{\rlap{\kern  .32em $\mspace {.5mu}\backslash$ }\int_{Q_x}}}
\def\Re{\operatorname{Re\,}}
\def\vth{\vartheta}
\def\Q{\mathcal Q}
\def\emph#1{{\it #1 }}
\def\ga{\gamma}
\def\cf{{\it cf}}
\def\rad{{\text{\it rad}}}
\def\inn#1#2{\langle#1,#2\rangle}
\def\noi{\noindent}
\def\meas{{\text{\rm meas}}}
\def\lc{\lesssim}
\def\ka{\kappa}
\def\la{\lambda}
\def\om{\omega}              \def\Om{\Omega}
\def\fC{{\mathfrak {C}}}
\def\fS{{\mathfrak {S}}}
\def\fV{{\mathfrak {V}}}
\def\fm{{\mathfrak {m}}}
\def\fv{{\mathfrak {v}}}
\def\bbR{{\mathbb {R}}}
\def\bbZ{{\mathbb {Z}}}
\def\cE{{\mathcal {E}}}
\def\cF{{\mathcal {F}}}
\def\cK{{\mathcal {K}}}
\def\cQ{{\mathcal {Q}}}
\def\cS{{\mathcal {S}}}
\def\cT{{\mathcal {T}}}
\def\cW{{\mathcal {W}}}
\def\cX{{\mathcal {X}}}
\def\Q{{\hbox{\bf Q}}}
\def\be#1{\begin{equation}\label{ #1}}
\def\endeq{\end{equation}}
\def\endal{\end{align}}
\def\bas{\begin{align*}}
\def\eas{\end{align*}}
\def\bi{\begin{itemize}}
\def\ei{\end{itemize}}
\def\emph#1{{\it #1}}
\def\textbf#1{{\bf #1}}
\theoremstyle{plain}
   \newtheorem{theorem}{Theorem}[section]
   \newtheorem{proposition}[theorem]{Proposition}
   \newtheorem{lemma}[theorem]{Lemma}
   \newtheorem{corollary}[theorem]{Corollary}
   \newtheorem{theorem*}{Theorem}
\theoremstyle{remark}
\theoremstyle{definition}
\numberwithin{equation}{section}
\begin{document}

\title{On  radial and conical Fourier  multipliers}

\author[Y. Heo \ \ \ F. Nazarov \ \ \ A. Seeger]{Yaryong Heo \ \ \ F\"edor Nazarov  \ \ \  Andreas Seeger}

\address{
Department of Mathematics\\ University of Wisconsin-Madison\\Madison, WI 53706, USA}
\email{heo@math.wisc.edu}
\email{nazarov@math.wisc.edu}
\email{seeger@math.wisc.edu}

\subjclass{42B15}

\begin{thanks} {Y.H. supported by  Korea Research Foundation Grant KRF-2008-357-C00002
and National Research Foundation of Korea Grant NRF-2009-0094068.
F.N. supported in part by NSF grant 0800243.
A.S. supported in part by NSF grant 0652890.}
\end{thanks}


\begin{abstract}
We investigate connections between radial Fourier multipliers  on $\bbR^d$
and  certain  conical Fourier multipliers on $\bbR^{d+1}$. As an
application we obtain a new  weak type endpoint bound for the Bochner-Riesz
multipliers associated to the light cone in $\bbR^{d+1}$, where $d\ge 4$,
and results on characterizations of $L^p\to L^{p,\nu}$ inequalities for
convolutions with radial kernels.
\end{abstract}
\maketitle
\section*{Introduction}
This paper is a sequel to \cite{hns} in which the authors obtained a
characterization of radial multipliers of $\cF L^p(\bbR^d)$ provided that
$1<p<2 $ and the dimension $d$ is large enough.
The main estimate in \cite{hns} was concerned with a convolution
inequality for surface measure on spheres  which in this paper we
state as Hypothesis $\text{Sph}(p_1,d)$  for some $p_1>1$. 
Under this hypothesis we shall  prove  several equivalent statements on cone
multipliers and radial Fourier multipliers.


In what follows we fix   
a radial $C^\infty(\bbR^d)$
function $\psi_\circ$  supported in  a small  ball of radius
centered at the origin  (say, of radius $\le (100 d)^{-1}$) 
whose Fourier transform vanishes at the origin to high order (say 
$100d$). We assume that $\wh \psi_\circ(\xi)\neq 0$ for $1/8\le |\xi|\le
8$. Set
\Be \label{psidef} \psi=\psi_\circ*\psi_\circ
\Ee
and,
for $y\in \bbR^d$ and for $r\ge 1$, let $\sigma_r$ be surface measure
on the sphere of radius $r$ centered at the origin, i.e.
\Be \inn{\sigma_r}{f}= r^{d-1} \int_{S^{d-1}} f(ry') d\sigma_1(y')\,.
\label{sigmar}\Ee
\medskip

\noi{\bf Hypothesis $\text{Sph}\mathbf{(p,d)}$.} {\it There is a constant $C$
  so that for every
$h\in L^{p}(\bbR^d\times \bbR^+;dy\, r^{d-1}dr)$
the inequality
\begin{multline}\label{mainlpcont}
\Big\| \int_{\bbR^d}\int_1^\infty
h(y,r) \sigma_r*\psi(\cdot -y) \, dr dy \Big\|_{L^{p}(\bbR^d)} \\ \le C
\Big(\iint_{\bbR^d\times \bbR^+} |h(y,r)|^p dy\, r^{d-1}dr\Big)^{1/p}
\end{multline}
holds.}

\begin{theorem} \label{hnsthm}(\cite{hns}) {\it  Hypothesis Sph(p,d) holds for $d\ge 4$,
$1\le p<\frac{2(d-1)}{d+1}$.}
\end{theorem}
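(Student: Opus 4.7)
The plan is to prove the bound for $T^*h(x)=\iint h(y,r)\sigma_r*\psi(x-y)\,dr\,dy$ via an atomic decomposition of the source space $L^p(\bbR^d\times\bbR^+;dy\,r^{d-1}dr)$, combined with sharp $L^2$ estimates on smoothed spherical averages. The case $p=1$ is elementary: by Minkowski's inequality together with the mass bound $\|\sigma_r\|_{\mathcal M}\approx r^{d-1}$, the weight $r^{d-1}dr$ in the source norm is precisely what is needed to absorb the growth of $\|\sigma_r*\psi\|_1$ in $r$. So by interpolation it suffices to establish a (possibly restricted weak-type) endpoint bound at $p_1=\tfrac{2(d-1)}{d+1}$.

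Since $\wh\psi$ is supported in a fixed annulus, a Littlewood-Paley decomposition reduces matters to the case where the output $T^*h$ has Fourier support in a single shell $|\xi|\sim 1$. Define an atom $a(y,r)$ as a function supported in a parallelepiped $Q_0\times[R_0,2R_0]$ with $R_0\ge 1$, satisfying an $L^2$-normalization of the form $\|a\|_{L^2(dy\,r^{d-1}dr)}\le \mu(Q_0\times[R_0,2R_0])^{1/2-1/p}$ together with a cancellation condition in the $y$-variable. To bound $\|T^*a\|_p$ uniformly in the atom, one splits $T^*a$ into a main piece supported in an annular region of thickness $O(1)$ around $\{x:|x-y|=r,\,y\in Q_0,\,r\in[R_0,2R_0]\}$ and a rapidly decaying tail. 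The main piece is controlled via H\"older's inequality and the $L^2\to L^2$ bound $\|f\mapsto f*\sigma_r*\psi\|\lesssim R_0^{(d-1)/2}$, which in turn follows from Plancherel and the stationary-phase estimate $|\wh{\sigma_r}(\xi)\wh\psi(\xi)|\lesssim r^{(d-1)/2}$ on the support of $\wh\psi$; the tail is controlled by the Schwartz decay of $\psi$ together with the atom's cancellation.

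The main obstacle is that the naive combination of H\"older's inequality with the above $L^2$ spherical bound only yields the weaker range $p<\tfrac{2d}{d+1}$, far from the claimed threshold $p_1=\tfrac{2(d-1)}{d+1}$. To close the gap, the argument in \cite{hns} introduces a stopping-time refinement of the atomic decomposition that isolates the sub-boxes on which the spherical averages concentrate, and combines this with a geometric-combinatorial bound on the overlap pattern of translated spheres $y+rS^{d-1}$ of radius $\sim R_0$: one must count how many nearly coincident $(d-1)$-dimensional spheres, parametrized by $(y,r)$ in a fixed box, can pass through a given point in $\bbR^d$. This overlap estimate is the technical heart of \cite{hns}, and it is what forces the ambient dimension to be $\ge 4$: only then does the transversality of generic spheres yield the gain needed to reach the sharp Stein-Tomas-Herz threshold.
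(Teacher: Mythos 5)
This statement is imported from \cite{hns} and is not proved in the present paper; Theorem~\ref{hnsthm} is stated precisely so that the rest of the paper can treat it as a black box (Hypothesis $\text{Sph}(p_1,d)$). Your sketch does reproduce, at a high level, the architecture of the argument in \cite{hns}: the $p=1$ case by Minkowski and the mass bound, the reduction to $L^2$-normalized atoms with cancellation, the split of $T^*a$ into a main annular piece plus a Schwartz tail, the use of $|\wh{\sigma_r}(\xi)\wh\psi(\xi)|\lesssim r^{(d-1)/2}$, and, crucially, a Christ-type density decomposition together with a geometric overlap lemma for families of spheres, with the dimension restriction $d\ge 4$ entering precisely through that lemma. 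Since you ultimately defer the overlap lemma to \cite{hns} without stating or proving it, your proposal is a roadmap to the proof rather than a proof, and in that respect it is on the same footing as the paper's own citation.

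There are, however, two factual slips worth correcting. First, the claim that the naive H\"older-plus-$L^2$ argument ``only yields the weaker range $p<\tfrac{2d}{d+1}$, far from the claimed threshold $p_1=\tfrac{2(d-1)}{d+1}$'' has the inequality direction reversed: $\tfrac{2d}{d+1}>\tfrac{2(d-1)}{d+1}$, so a bound on that range would be \emph{stronger} than the theorem, not weaker. In fact $\tfrac{2d}{d+1}$ is the conjectured sharp threshold for Hypothesis $\text{Sph}(p,d)$ (it is what condition (iv) of Theorem~\ref{dLconverse} is calibrated to), while the naive argument produces a genuinely smaller range; the geometric overlap lemma in \cite{hns} is what pushes the exponent up to $\tfrac{2(d-1)}{d+1}$. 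Second, calling $\tfrac{2(d-1)}{d+1}$ ``the sharp Stein--Tomas--Herz threshold'' is not accurate: the Stein--Tomas restriction exponent is $\tfrac{2(d+1)}{d+3}$, and the sharp threshold for the radial multiplier characterization is $\tfrac{2d}{d+1}$; the exponent $\tfrac{2(d-1)}{d+1}$ arises from the specific combinatorial gain available in the overlap lemma and is not known to be sharp for Hypothesis $\text{Sph}(p,d)$.
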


\section{Statement of results}\label{statements}
In what follows
$L^{p,\nu}$ denotes the
standard Lorentz space, and we shall usually assume that $p\le \nu\le\infty$.
We denote by  $\cF_d f$  the $\bbR^d$ Fourier transform  of $f$,
defined  by $\cF_d f(\xi)=\int f(y) e^{-i\inn {y}{\xi}}dy$.
We shall also write $\cF f $ or $\widehat f$ if the dimension is clear
from the context.

For each $k\in \bbZ$ let $\gamma_k $ be 
supported in
 $(-1/4,1/4)$. Define
\Be  m(\xi,\tau)\equiv  m_\gamma(\xi,\tau)
= \sum_{k\in \bbZ} \gamma_k\big(\frac{|\xi|-\tau}{2^k}\big) \,
 \indicator{[2^k,
  2^{k+1})}(\tau)
\Ee
where $\indicator{E}$ denotes the characteristic function of $E$.
Let
$T$ denote the operator acting  on Schwartz functions in $\bbR^{d+1}$ by
\Be \label{Tdplus1}
\cF_{d+1}[Tf] (\xi,\tau) = m_\gamma(\xi,\tau) \cF_{d+1}f(\xi,\tau).
\Ee
Moreover,
for each fixed $\tau\in (0,\infty) $, define
an operator
$T^{\tau}$ on functions in $\bbR^{d}$ by
\Be\label{Ttau}\cF_d[T^{\tau}\!  f] (\xi) =
 \gamma_k\big(\frac{|\xi|-\tau}{2^k}
\big)
\F_d f (\xi), \quad\text  { if } \tau\in [2^k, 2^{k+1})
.\Ee

\begin{theorem} \label{dLconverse}
Let $T$, $T^\tau$ be as in \eqref{Tdplus1}, \eqref{Ttau}.

Suppose that
$1<p_1<\frac{2d}{d+1}$
and suppose that Hypothesis $\text{Sph}(p_1,d)$ holds.
Let $1<p<p_1$, $p\le \nu\le\infty$.
Then the following statements are equivalent.

(i) $T$ maps $L^p(\Bbb R^{d+1}) $ boundedly to
$L^{p,\nu}(\bbR^{d+1})$.

(ii) There is a constant $C_p$ so that for all sequences $\{\tau_k\}_{k=-\infty}^\infty$ satisfying
$\tau_k\in [2^k,2^{k+1})$, and for all $f\in L^p(\bbR^d)$
\[
\Big\|\sum_{k\in \bbZ} \alpha_k T^{\tau_k}\! f\Big\|_{L^{p,\nu}(\bbR^d)}
\le C_p \sup_{k\in \bbZ}|\alpha_k| \,\|f\|_{L^p(\bbR^d)}
\]

%
(iii) For every $k\in \bbZ$ there  is a $\tau_k\in [2^k, 2^{k+1})$ such that
 $T^{\tau_k} $ maps $L^p(\Bbb R^{d}) $ boundedly to
$L^{p,\nu}(\bbR^{d})$, and such that
$\sup_k \|T^{\tau_k}\|_{L^p\to L^{p,\nu}}<\infty$.

(iv) The functions $s\mapsto \widehat \gamma_k(s) \,(1+|s|)^{-\frac{d-1}{2}} $
 belong to the weighted Lorentz space
$L^{p,\nu}(\bbR, (1+|\cdot|)^{d-1})$, with the uniform bound
\Be\label{gammakhat}
 \sup_{k\in \bbZ}\, \biggl\|\frac{\widehat \gamma_k}
 { (1+|\cdot|)^{\frac{d-1}2}}
\biggl\|_{L^{p,\nu}(\bbR,
   (1+|r|)^{d-1}dr)}
<\infty\, .
\Ee

(v) The functions $\cF_d^{-1} [\gamma_k(|\cdot|)]$
belong to
$L^{p,\nu}(\bbR^d)$, with the uniform bound
$$\sup_{k\in \bbZ}
\big\|\cF_d^{-1} [\gamma_k(|\cdot|)]\big\|_{L^{p,\nu}(\bbR^d)}\,<\, \infty\,.
$$
\end{theorem}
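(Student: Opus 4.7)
The plan is to establish the equivalences through the cycle (iv)$\Leftrightarrow$(v)$\Leftrightarrow$(iii)$\Leftrightarrow$(ii) and (i)$\Leftrightarrow$(iii). Of these, the radial equivalences (iv)--(ii) and the direction (i)$\Rightarrow$(iii) are relatively mechanical; the main obstacle is the implication (iii)$\Rightarrow$(i), which is where Hypothesis $\text{Sph}(p_1,d)$ is invoked.

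For (iv)$\Leftrightarrow$(v), one uses the Hankel representation of the radial inverse Fourier transform,
\[
\cF_d^{-1}[\gamma_k(|\cdot|)](x) \;=\; c_d\,|x|^{-(d-2)/2}\int_0^\infty \gamma_k(r)\,J_{(d-2)/2}(r|x|)\,r^{d/2}\,dr,
\]
together with the asymptotic $J_{(d-2)/2}(z)\sim \sqrt{2/\pi z}\cos(z-c_d')$ for $z\gtrsim 1$; this gives $|\cF_d^{-1}[\gamma_k(|\cdot|)](x)|\sim |x|^{-(d-1)/2}|\widehat{\gamma_k}(|x|)|$ for $|x|\gtrsim 1$ (while for $|x|\lesssim 1$ the kernel is bounded and contributes only a constant to the $L^{p,\nu}$-norm), and passage to radial coordinates identifies the two norms. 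For (iii)$\Leftrightarrow$(v), note that the kernel of $T^{\tau_k}$ is, after the unimodular modulation $e^{\pm i\tau_k|x|}$ and a dilation by $2^k$, essentially $\cF_d^{-1}[\gamma_k(|\cdot|)]$; since modulations are $L^{p,\nu}$-isometries and the kernel is band-limited in a fixed annulus on the dilated scale, a Young-type inequality for radial band-limited convolution (cf.\ \cite{hns}) yields $\|T^{\tau_k}\|_{L^p\to L^{p,\nu}}\sim \|\cF_d^{-1}[\gamma_k(|\cdot|)]\|_{L^{p,\nu}}$, the converse coming from testing on a compactly supported bump. The step (iii)$\Rightarrow$(ii) uses the dyadic separation of the frequency supports of $T^{\tau_k}f$ via Littlewood-Paley theory in $L^{p,\nu}$, while (ii)$\Rightarrow$(iii) is trivial; and (i)$\Rightarrow$(iii) is obtained by testing $T$ on $f(x,t)=g(x)\phi_R(t)e^{it\tau_k}$, with $\phi_R$ rescaled so that $\widehat{\phi_R}$ concentrates at the origin in $\tau$, forcing the cone multiplier to act as $T^{\tau_k}$ in the $x$-slice.

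The main difficulty is (iii)$\Rightarrow$(i). One writes $Tf(x,t) = (2\pi)^{-1}\int e^{it\tau}T^\tau[\tilde f(\cdot,\tau)](x)\,d\tau$ via partial Fourier transform in $t$, and represents the kernel of $T^\tau$, for $\tau\in[2^k,2^{k+1})$, using the Bessel asymptotic together with the nonvanishing of $\widehat{\psi}$ on $[1/8,8]$ (which allows one to invert $\psi$ on the relevant frequency range), as a superposition
\[
K_\tau(x)\;\approx\; \int \widehat{\gamma_k}(2^k s)\,e^{-i\tau s}\,(\sigma_{|s|}*\psi)(x)\,\omega_k(s)\,ds
\]
with an explicit weight $\omega_k$. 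Substituting and interchanging integrals recasts $Tf$ in the form $\iint h_f(y,r)\,\sigma_r*\psi(\cdot-y)\,dr\,dy$ appearing in Hypothesis $\text{Sph}$, with the $t$-variable absorbed into the shift $y$. Hypothesis $\text{Sph}(q,d)$ for any $q<p_1$---available by interpolating the assumed $\text{Sph}(p_1,d)$ with the trivial bound $\text{Sph}(1,d)$ (which holds since $\|\sigma_r*\psi\|_{L^1}\lesssim r^{d-1}$)---then bounds $\|Tf\|_{L^q(\bbR^{d+1})}$ by the mixed norm $\|h_f\|_{L^q(dy\,r^{d-1}dr)}$, and the latter is controlled by $\|f\|_{L^q(\bbR^{d+1})}$ via the kernel decomposition and (v). The $L^{p,\nu}$-refinement for $p\le\nu\le\infty$ follows by real interpolation with the trivial $L^2$-bound from Plancherel, the Lorentz structure being supplied by the kernel estimate in (v).
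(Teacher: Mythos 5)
Your outline misplaces where the hypothesis $\text{Sph}(p_1,d)$ is actually needed, and the two steps you describe as ``relatively mechanical'' are in fact either the hard content or false as stated. First, (v)$\implies$(iii) is not a Young-type inequality: for $p<2$, a radial convolution kernel lying in $L^{p,\nu}(\bbR^d)$, even with frequency support in a fixed annulus, does not mechanically give an $L^p\to L^{p,\nu}$ bound on general (non-radial) functions; that implication is precisely the main theorem of \cite{hns} and of this paper, and it is exactly where Hypothesis $\text{Sph}$ enters. What is available cheaply (and is quoted from \cite{gs}) is the converse (iii)$\implies$(iv)$\iff$(v). Likewise your (iii)$\implies$(ii) via ``Littlewood--Paley theory in $L^{p,\nu}$'' does not go through: uniform scalar bounds for the individual $T^{\tau_k}$ do not yield the vector-valued/square-function estimate needed to sum the dyadic pieces when $p<2$. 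In the paper, (ii) is deduced from the $(d+1)$-dimensional multiplier Theorem~\ref{maintheorem} (applied with $b_k=0$ and $\Gamma_k=\alpha_k\gamma_k(\cdot-2^{-k}\tau_k)$) followed by de Leeuw's restriction lemma (Lemma~\ref{leeuwlemma}); there is no direct passage from (iii) to (ii).

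The decisive gap is in your (iii)$\implies$(i). After rewriting $Tf$ as a superposition of translates of $\sigma_r*\psi$, you assert that the resulting density $h_f$ satisfies $\|h_f\|_{L^q(dy\,r^{d-1}dr)}\lesssim\|f\|_{L^q}$ ``via the kernel decomposition and (v)''; this is unjustified, and it cannot be repaired in the generality you claim: combined with your final step (real interpolation of the $L^q$, $q<p$, bound with the $L^2$ bound) it would give strong-type $L^p\to L^p$ boundedness under condition (iv) alone, which is false for $\nu>p$ --- e.g.\ the endpoint cone Bochner--Riesz multiplier of Corollary~\ref{wtbr} satisfies (iv) with $\nu=\infty$ but is only of weak type. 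The actual proof of (iv)$\implies$(i),(ii) in the paper (Theorem~\ref{maintheorem}, \S\ref{mainestimate}) cannot avoid the following ingredients, all absent from your sketch: Fourier inversion in $\gamma_k$ and Lemma~\ref{avelemma} to replace $e^{\pm i|\xi|s}$ by spherical measures plus a Calder\'on--Zygmund error; an atomic decomposition of $f$ built from the Peetre-type maximal square function $\fS f$ and Whitney cubes of the level sets $\Omega_j^*$; a short-range estimate handled by $L^2$ almost-orthogonality and the dyadic interpolation Lemma~\ref{dyadicinterpol}; and a long-range estimate in which Hypothesis $\text{Sph}(p_1,d)$ is used only through Proposition~\ref{propext}, applied to $L^2$-normalized atom pieces, whose interpolated gain $2^{-\ell\alpha}$ makes the sum over scales converge for $p<p_1$. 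It is this $L^2$-normalization of atoms, not a pointwise control of $h_f$ by $f$, that converts the spherical convolution inequality into the sharp condition (iv).
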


From Theorem \ref{hnsthm} we get
\begin{corollary} \label{dLconversecor}
Statements (i)-(v) in Theorem \ref{dLconverse}
 are equivalent if $d\ge 4$, $1<p<\frac{2(d-1)}{d+1}$, $p\le\nu\le\infty$.
\end{corollary}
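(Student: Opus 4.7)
The corollary is an immediate consequence of Theorem \ref{dLconverse} combined with Theorem \ref{hnsthm}: given $d\ge 4$ and $1<p<\tfrac{2(d-1)}{d+1}$, pick any exponent $p_1\in(p,\tfrac{2(d-1)}{d+1})$; Theorem \ref{hnsthm} supplies Hypothesis $\text{Sph}(p_1,d)$, and since $\tfrac{2(d-1)}{d+1}<\tfrac{2d}{d+1}$, Theorem \ref{dLconverse} then yields the equivalence at the exponent $p$. The substantive work lies in Theorem \ref{dLconverse}, and the remainder of this sketch outlines how I would prove that theorem.

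My strategy is to establish the equivalence via the non-Sph links $(iv)\Leftrightarrow(v)$, $(iii)\Rightarrow(v)$, $(ii)\Rightarrow(iii)$, $(i)\Rightarrow(iii)$, together with the Sph-dependent implication $(iv)\Rightarrow(i),(ii),(iii)$, all of which are carried out by a single sphere-representation machinery. For $(iv)\Leftrightarrow(v)$, polar coordinates give
\[
\cF_d^{-1}[\gamma_k(|\cdot|)](x)=c_d\int_0^{1/4}\gamma_k(s)\,s^{d-1}\frac{J_{(d-2)/2}(s|x|)}{(s|x|)^{(d-2)/2}}\,ds,
\]
and the Bessel asymptotic $J_\nu(t)\sim\sqrt{2/(\pi t)}\cos(t-\pi\nu/2-\pi/4)$ produces the dominant contribution $|x|^{-(d-1)/2}\Re\{e^{-i\phi_d}\widehat{g_k}(|x|)\}$ with $g_k(s)=\gamma_k(s)s^{(d-1)/2}$; expressing the $L^{p,\nu}(\bbR^d)$-norm of this radial function as a Lorentz norm on $\bbR^+$ with measure $r^{d-1}\,dr$ identifies it with (iv). For $(iii)\Rightarrow(v)$, test $T^{\tau_k}$ on a radial Schwartz function $\eta$ whose Fourier transform equals $1$ on the annular support of the multiplier: this realizes the kernel $K_{\tau_k}=\cF_d^{-1}[\gamma_k((|\cdot|-\tau_k)/2^k)]$ as $T^{\tau_k}\eta$, so $\|K_{\tau_k}\|_{L^{p,\nu}}\lesssim 2^{kd/p'}$; a dilation $x\mapsto 2^{-k}x$ and a stationary-phase identification absorbing $\tau_k/2^k$ into a pointwise phase yields the uniform bound $\||\cdot|^{-(d-1)/2}\widehat\gamma_k(|\cdot|)\|_{L^{p,\nu}(\bbR^d)}<\infty$. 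The direction $(ii)\Rightarrow(iii)$ is immediate by setting all but one $\alpha_k$ to zero, and $(i)\Rightarrow(iii)$ follows from a Lorentz-space version of de Leeuw's restriction theorem on the hyperplane $\{\tau=\tau_k\}$ in $\bbR^{d+1}$.

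The main step invokes Hypothesis $\text{Sph}(p_1,d)$. The key manipulation is
\[
\gamma_k\bigl((|\xi|-\tau)/2^k\bigr)=\frac{1}{2\pi}\int\widehat\gamma_k(u)\,e^{-iu\tau/2^k}\,e^{iu|\xi|/2^k}\,du;
\]
after localization by the cutoff $\widehat\psi(\xi/\rho)$ at $\rho\sim 2^k$ (compatible with the annular support of $m_\gamma$ because of the support conditions on $\psi_\circ$), the symbol $\widehat\psi(\xi/\rho)\,e^{iu|\xi|/2^k}$ is expressed, via the half-wave-propagator identity, as a superposition of rescaled sphere-Fourier kernels $\widehat{\sigma_r}\widehat\psi$. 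This casts $Tf$ in the form
\[
Tf(x,t)=\iint h(y,r)\,(\sigma_r*\psi)(x-y)\,dr\,dy
\]
required by Hypothesis $\text{Sph}(p_1,d)$, with data $h$ built from $f$ and weighted by $\widehat\gamma_k$ evaluated at the appropriate argument. Hypothesis Sph then supplies the corresponding $L^{p_1}$-endpoint, which, combined with the Lorentz-space control of the weight from (iv) and the room afforded by $p<p_1$ (against a trivial $L^2\to L^2$ bound from Plancherel on $m_\gamma$), gives, via Lorentz-space interpolation, the desired $L^p\to L^{p,\nu}$ conclusion for $T$, yielding (i). The same construction localized to a single dyadic $\tau$-slice produces (ii), and the single-frequency specialization yields (iii).

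The principal obstacle is the sphere-representation itself: one must arrange that the weight multiplying $\sigma_r*\psi$ in the decomposition of $Tf$ is precisely $\widehat\gamma_k$ at the correct argument, with all the $k$-dependent modulations, dilations, and cutoffs bookkept uniformly so that the Lorentz-weight bound of (iv) translates into the required mixed-norm bound on $h$, and the Lorentz refinement of the final estimate must survive the interpolation step from the strong $L^{p_1}$-endpoint supplied by Hypothesis $\text{Sph}(p_1,d)$.
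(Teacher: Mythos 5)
Your first paragraph is exactly the paper's proof: the corollary follows at once by choosing $p_1\in\bigl(p,\tfrac{2(d-1)}{d+1}\bigr)$, invoking Theorem \ref{hnsthm} to get Hypothesis $\text{Sph}(p_1,d)$, and applying Theorem \ref{dLconverse}, which is admissible since $p_1<\tfrac{2(d-1)}{d+1}<\tfrac{2d}{d+1}$ and $1<p<p_1$, $p\le\nu\le\infty$. The appended outline of how one would prove Theorem \ref{dLconverse} itself is not needed for this corollary (that theorem is proved separately in the paper), so no further verification of it is required here.
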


The equivalence of (iv)$\iff$(v) and 
the implication
(iii)$\implies$(iv) are  in \cite{gs}.
 The implication (ii)$\implies$(iii) is trivial.
The
implication $\text{(i)}\implies\text{(iii) }$
follows from a version 
of de Leeuw's theorem, see Lemma \ref{leeuwlemma}.
It is not presently
clear how to deduce the global statement (ii) directly from (i), without going through (iv) or (v).
The proofs of the main implications $\text{(iv)}\implies\text{(i)}$ and
$\text{(iv)}\implies\text{(ii)}$  are
 given in \S\ref{mainestimate}, \S\ref{pfdLconverse};
they rely on  Hypothesis $\text{Sph}(p_1,d)$.

As a consequence of the  implication (iv)$\implies$(i)   we shall
derive a new  endpoint result
for the so-called Bochner-Riesz multipliers for the cone, defined by
\Be\label{br}\rho_\la(\xi,\tau)= \Big(1-\frac{|\xi|^2}{\tau^2}\Big)^\la_+.
\Ee
It is conjectured that $\rho_\la$ is a multiplier of $\cF L^p(\bbR^{d+1})$
if  $\la >d(1/p-1/2)-1/2$ and $1<p<\frac{2d}{d+1}$; this condition is necessary. This
conjecture is open in the full $p$-range. The first sharp $L^p$
results for some range of $p$ were proved by T. Wolff \cite{wolff}
in two dimensions, with extensions and improvements in
\cite{LW}, \cite{gs-lsm}, \cite{gss}, \cite{heo}, \cite{hns}.
For the endpoint $\la=d(1/p-1/2)-1/2$ one conjectures a weak type
$(p,p)$ inequality for $p<\frac{2d}{d+1}$. 
This endpoint inequality  cannot be replaced by a stronger statement
such as $L^p\to L^{p,\nu}$ boundedness for $\nu<\infty$. In
\S\ref{conemultwt}
we prove

\begin{corollary}
\label{wtbr}
Let $d\ge 2$ and $p_1>1$, and suppose that Hypothesis $\text{Sph}(p_1,d)$
holds. Let $\rho_\la$ be as
in \eqref{br}. If $\la =d(1/p-1/2)-1/2$ and $1<p<p_1$  then
\Be\label{coneLpinftyestimate}
\big\|\cF^{-1} [\rho_\la \widehat f]\big\|_{L^{p,\infty}(\bbR^{d+1})}
\le C_p \|f\|_{L^p(\bbR^{d+1})}
\Ee
for all $f\in L^p(\bbR^{d+1})$.
In particular,  \eqref{coneLpinftyestimate} holds for
$d\ge 4$ and $1<p<\frac{2(d-1)}{d+1}$.
\end{corollary}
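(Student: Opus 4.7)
The plan is to apply the implication (iv)$\implies$(i) of Theorem~\ref{dLconverse} to a careful decomposition of $\rho_\lambda$. First, split $\rho_\lambda = \rho_\lambda^{\text{int}} + \rho_\lambda^{\text{cone}}$, where $\rho_\lambda^{\text{int}}$ is supported in $|\xi|/\tau \le 3/4$ (a smooth compactly supported Fourier multiplier, bounded $L^p \to L^p$ by standard arguments) and $\rho_\lambda^{\text{cone}}$ is supported in a thin conic neighborhood of $|\xi|/\tau = 1$.

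For $\rho_\lambda^{\text{cone}}$, on each dyadic interval $\tau \in [2^k, 2^{k+1})$, introduce $s = (|\xi|-\tau)/2^k$ and $a(\tau) = 2^k/\tau \in (1/2, 1]$, so that on the support of a cutoff $\chi \in C_c^\infty([-1/4,0])$,
\[
\rho_\lambda^{\text{cone}}(\xi, \tau) = a(\tau)^\lambda (-s)^\lambda (2+a(\tau) s)^\lambda_+ \chi(s).
\]
Choose $\gamma_k(s) := \chi(s)(-s)^\lambda(2+s)^\lambda$ (independent of $k$), which captures the profile at the reference scale $a = 1$; Taylor expansion of $a^\lambda(2+as)^\lambda$ in $(a-1)\in(-1/2,0]$ then produces
\[
\rho_\lambda^{\text{cone}} = m_\gamma + \sum_{n \ge 1}(a(\tau)-1)^n \widetilde m^{(n)},
\]
where each $\widetilde m^{(n)}$ has the form of Theorem~\ref{dLconverse} with profile $\gamma^{(n)}(s) = \chi(s)s^n(-s)^\lambda$ of effective singularity order $\lambda + n$. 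The prefactor $(a(\tau)-1)^n = (2^k/\tau - 1)^n$ is piecewise smooth in $\tau$; after introducing a smooth dyadic partition of unity in $\tau$, each piece becomes a bounded Mikhlin-type multiplier in $\tau$, acting on both $L^p$ and $L^{p,\infty}$.

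The heart of the argument is the verification of condition (iv) for $\gamma_k$ at the critical exponent. Classical oscillatory-integral asymptotics yield $|\widehat{\gamma_k}(r)| \sim |r|^{-\lambda-1}$ for $|r|\gg 1$, arising from the $(-s)^\lambda$ singularity at $s=0$. Since $\lambda + 1 + \tfrac{d-1}{2} = d/p$ at $\lambda = d(1/p-1/2) - 1/2$, the quotient
\[
\frac{|\widehat{\gamma_k}(r)|}{(1+|r|)^{(d-1)/2}} \sim |r|^{-d/p}, \qquad |r| \gg 1,
\]
and the elementary identity $\int_{|r|<R}(1+|r|)^{d-1}\,dr \sim R^d$ shows that $|r|^{-d/p}$ belongs precisely to $L^{p,\infty}(\bbR,(1+|r|)^{d-1}dr)$ (the weighted measure of super-level sets scales as $t^{-p}$). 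Thus condition (iv) holds uniformly in $k$ with $\nu = \infty$, and (iv)$\implies$(i) delivers $T_{m_\gamma}: L^p(\bbR^{d+1})\to L^{p,\infty}(\bbR^{d+1})$.

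For the correction terms $n \ge 1$, the improved order $\lambda + n$ makes $|\widehat{\gamma^{(n)}}(r)|/(1+|r|)^{(d-1)/2}\sim |r|^{-(d/p+n)}$ lie in strong weighted $L^p$; applying (iv)$\implies$(i) with $\nu = p$ gives strong $L^p \to L^p$ bounds for $\widetilde T^{(n)}$, and the Taylor coefficients contribute $O(2^{-n})$ factors, so the series converges. Using the quasi-Banach structure of $L^{p,\infty}$ to sum, and adding the $L^p$-bounded interior piece, yields~\eqref{coneLpinftyestimate}; Theorem~\ref{hnsthm} then gives the explicit range $d\ge 4$, $1<p<\tfrac{2(d-1)}{d+1}$. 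The main technical obstacle is managing the piecewise-smooth prefactor $(a(\tau)-1)^n$: its inverse Fourier transform in $\tau$ is a singular distribution, and uniform $L^{p,\infty}$ boundedness across dyadic $\tau$-scales requires the smooth partition of unity described above, or equivalently absorbing the $\tau$-dependence into a $k$-dependent family $\gamma_k^{(n)}$.
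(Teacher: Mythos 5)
Your overall strategy is the paper's: reduce to a multiplier of the form $\sum_k \indicator{[2^k,2^{k+1})}(\tau)\,\gamma_k\big(2^{-k}(|\xi|-\tau)\big)$, verify condition (iv) with $\nu=\infty$ from $|\widehat\gamma(r)|\lesssim (1+|r|)^{-\lambda-1}$ and the identity $\lambda+1+\tfrac{d-1}{2}=\tfrac dp$, and dispose of the remaining factors by classical multiplier theorems; that core verification is correct. However, there is a genuine error in your treatment of the correction terms. Expanding $F(a,s)=a^\lambda(2+as)^\lambda$ in powers of $(a-1)$ gives coefficients $c_n(s)=\partial_a^nF(1,s)/n!$, and these contain in particular the term $\binom{\lambda}{n}(2+s)^\lambda$ coming from all $a$-derivatives falling on the factor $a^\lambda$; hence $c_n(0)\neq 0$ in general, the $n$-th profile is $\chi(s)(-s)^\lambda c_n(s)$ and \emph{not} $\chi(s)s^n(-s)^\lambda$, and its singularity at $s=0$ is still of order $\lambda$. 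Consequently $|\widehat{\gamma^{(n)}}(r)|$ decays only like $(1+|r|)^{-\lambda-1}$, the quotient in (iv) lies merely in the weak space $L^{p,\infty}(\nu_d)$ at the critical $\lambda$, and your claimed strong $L^p\to L^p$ bounds for the corrections (via $\nu=p$) do not follow. The gap is repairable: either expand only the factor $(2+as)^\lambda$ in powers of $(a-1)s$, keeping $a(\tau)^\lambda$ as a separate $\tau$-only multiplier of bounded variation on dyadic intervals, or apply the weak-type conclusion ($\nu=\infty$) to every term and sum using $|a(\tau)-1|^n\le 2^{-n}$ together with the normability of $L^{p,\infty}$ for $p>1$ (the coefficients $c_n$ grow at most polynomially in $n$).

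Two further points need attention. First, the split into an "interior" piece supported in $|\xi|/\tau\le 3/4$ and a cone piece carrying $\chi(s)$ with $s\in[-1/4,0]$ leaves uncovered the intermediate region ($s\in(-1,-1/4)$ with $|\xi|/\tau>3/4$); this is exactly the paper's $\widetilde a_\lambda$ piece and must be treated separately (Marcinkiewicz), and the interior symbol is homogeneous of degree zero, not compactly supported, so it is a Mikhlin--H\"ormander multiplier rather than a "smooth compactly supported" one. Second, the prefactor $\sum_k\indicator{[2^k,2^{k+1})}(\tau)(2^k/\tau-1)^n$ has genuine jumps at the dyadic points which no smooth partition of unity in $\tau$ removes; what is needed (and suffices) is the one-dimensional Marcinkiewicz theorem for multipliers of uniformly bounded variation on dyadic intervals, transferred to $L^{p,\infty}(\bbR^{d+1})$ by interpolation --- precisely how the paper handles its factor $\chi_E(\tau)$ --- while your alternative of "absorbing the $\tau$-dependence into $\gamma_k^{(n)}$" is not available, since Theorem \ref{dLconverse} requires $\gamma_k$ to be a function of the single variable $2^{-k}(|\xi|-\tau)$. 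For comparison, the paper avoids the Taylor series altogether by writing $\rho_\lambda\chi_{(0,\infty)}(\tau)=a_\lambda\cdot\sum_k\indicator{[2^k,2^{k+1})}(\tau)\gamma\big(2^{-k}(|\xi|-\tau)\big)+\widetilde a_\lambda$ with $\gamma(u)=(-u)_+^\lambda b(u)$, handling the smooth near-cone factor $a_\lambda$ and the off-cone part $\widetilde a_\lambda$ wholesale by Marcinkiewicz, which is both shorter and free of the summation issues above.
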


{\it Remark.}
Sharp  bounds in
 $H^p$, $p<1$ 
and sharp bounds for the
operator acting on functions of the form $f(x,t)= f_0(|x|,t)$ can be
found in Hong's articles \cite{hong1},  \cite{hong2}. More recently, Heo, Hong and  Yang  \cite{hhy} proved a
weak type $(1,1)$ inequality for a localized cone multiplier
$\chi(\tau) \rho_{(d-1)/2}(\xi,\tau)$, in dimension $d\ge 4$.
 As a  corresponding result for the
global cone multiplier  one can prove that for 
$\Re(\la)=(d-1)/2$ the 
operator
$f\to \cF_{d+1}^{-1} [\rho_\la \widehat f]$ is bounded
 from the Hardy space $H^1$ to $L^{1,\infty}$, under the assumption
 that $Sph(p_1,d)$ holds for some $p_1>1$. 
 This can be obtained by an analytic interpolation argument using  
 the analytic family of multipliers $\la\to \rho_\la$, the $H^p\to
 L^{p,\infty}$ bounds in \cite{hong1}  for $p<1$ and
 $\Re(\la)=d(1/p-1/2)-1/2$, and the $L^p$ result
 of Corollary \ref{wtbr}. For the justification of the analytic
 interpolation one uses an adaptation of arguments in
 \cite{sagher}. We shall not give details here.

\medskip

The equivalence of condition  (ii) in the theorem with conditions
(iv) or (v) immediately 
yields a  generalization of the main result in \cite{hns} to 
$L^p\to L^{p,\nu}$ inequalities.

\begin{corollary}
\label{mainthmLor}
Let $p_1>1$, $1<p<p_1<\frac{2d}{d+1}$  and assume that Hypothesis
$\text{Sph}(p_1,d)$
 holds.
 Let  $m=m_0(|\cdot|)$ be  a bounded radial function on $\bbR^d$ and
define $\cT_m$ by
$$\cF_d[{\cT_m f}] =m\cF_d f.$$
Then, for  any Schwartz function $\eta\neq 0$
\begin{equation}\label{equivlor}
\big\|\cT_m\big\|\ci{L^p\to L^{p,\nu}}\,\approx
\,\sup_{t>0}\,t^{d/p}\big\|\cT_m[\eta(t\cdot)]\big\|\ci{L^{p,\nu}}\,.
\end{equation}
Moreover, if $\phi\in C^\infty_c$ is compactly supported in
$(0,\infty)$ (and not identically zero) and 
$\ka_t$ denotes the Fourier transform on $\bbR$ of the function $\phi
m_0(t\cdot)$ then
\begin{equation}\label{m0}\big\|\cT_m\big\|\ci{L^p\to L^{p,\nu}}\,\approx
\sup_{t>0}\Big\| \frac{\ka_t}{ (1+|\cdot|)^{\frac{d-1}2}}
\Big\|_{L^{p,\nu} (\bbR;(1+|r|)^{d-1}dr)}<\infty.
\end{equation}
\end{corollary}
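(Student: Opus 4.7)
The plan is to reduce Corollary~\ref{mainthmLor} to the equivalences (ii)$\iff$(iv)$\iff$(v) in Theorem~\ref{dLconverse}, via a dyadic decomposition of the radial multiplier $m$. The ``$\gtrsim$'' direction of \eqref{equivlor} is immediate: testing $\cT_m$ on $f=\eta(t\cdot)$, whose $L^p$-norm equals $t^{-d/p}\|\eta\|_p$, gives $t^{d/p}\|\cT_m[\eta(t\cdot)]\|_{L^{p,\nu}}\le \|\eta\|_p\,\|\cT_m\|_{L^p\to L^{p,\nu}}$. The main content is the reverse inequality together with the comparison to \eqref{m0}.

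The key preliminary computation, via the substitution $\xi=t\zeta$ in the Fourier integral, is
\[
\cT_m[\eta(t\cdot)](x)=F_t(tx),\qquad F_t:=\cF_d^{-1}[m_0(t|\cdot|)\,\wh\eta\,],
\]
so $t^{d/p}\|\cT_m[\eta(t\cdot)]\|_{L^{p,\nu}}=\|F_t\|_{L^{p,\nu}(\bbR^d)}$. First take $\eta$ radial with $\wh\eta=\phi(|\cdot|)$ for a smooth bump $\phi\in C^\infty_c$ supported in a small neighborhood of $1$. Applying Bessel-function asymptotics to the radial profile of $m_0(t|\cdot|)\wh\eta$ (the same mechanism behind (iv)$\iff$(v) of Theorem~\ref{dLconverse}, established in \cite{gs}) yields
\[
\|F_t\|_{L^{p,\nu}(\bbR^d)}\,\approx\,\Big\|\frac{\kappa_t}{(1+|\cdot|)^{(d-1)/2}}\Big\|_{L^{p,\nu}(\bbR;(1+|r|)^{d-1}dr)}.
\]
Hence the suprema in \eqref{equivlor} and \eqref{m0} are comparable for this choice of $\eta$ and $\phi$.

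To close the loop to $\|\cT_m\|_{L^p\to L^{p,\nu}}$, decompose $m=\sum_{j=1}^J\sum_{k\in\bbZ} m_{k,j}$ using a finite partition of unity in the radial variable at each dyadic scale, arranged so that $m_{k,j}(\xi)=\gamma_{k,j}((|\xi|-\tau_{k,j})/2^k)$ with $\gamma_{k,j}$ supported in $(-1/4,1/4)$. Then for each fixed $j$, $\sum_k \cT_{m_{k,j}}=\sum_k T^{\tau_{k,j}}$ in the notation of \eqref{Ttau}, and a short calculation shows that $\wh{\gamma_{k,j}}$ equals $\kappa_{\tau_{k,j}}$ up to a phase and a bounded dilation factor. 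So finiteness of the right-hand side of \eqref{m0} yields condition~(iv) of Theorem~\ref{dLconverse} for each $j$. The implication (iv)$\implies$(ii) with $\alpha_k=1$, summed over the $J$ values of $j$, produces
\[
\|\cT_m\|_{L^p\to L^{p,\nu}}\,\lesssim\,\sup_{t>0}\Big\|\frac{\kappa_t}{(1+|\cdot|)^{(d-1)/2}}\Big\|_{L^{p,\nu}(\bbR;(1+|r|)^{d-1}dr)}.
\]
For arbitrary Schwartz $\eta\neq 0$ and non-zero $\phi\in C^\infty_c(0,\infty)$, a Littlewood--Paley comparison of the profiles against a reference bump shows the suprema are independent of the choice up to multiplicative constants. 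The main obstacle lies in the combinatorial setup of the finite dyadic partition ensuring the required support condition on each $\gamma_{k,j}$, and in the Littlewood--Paley comparison that allows arbitrary $\eta$ and $\phi$ rather than a fixed reference profile.
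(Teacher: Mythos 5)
Your reduction---the trivial testing bound for the lower estimate, the Hankel/Bessel characterization from \cite{gs} to identify the right-hand sides of \eqref{equivlor} and \eqref{m0}, and the finite dyadic decomposition of $m_0$ into families $\{\gamma_{k,j}\}_k$ supported in $(-1/4,1/4)$ so that the implication (iv)$\implies$(ii) of Theorem \ref{dLconverse} with $\alpha_k=1$, summed over the finitely many $j$, yields the $L^p\to L^{p,\nu}$ bound for $\cT_m$---is exactly the route the paper intends, since it states the corollary follows ``immediately'' from the equivalence of (ii) with (iv)/(v) together with the results of \cite{gs}. The two points you defer (the bookkeeping ensuring each piece has center $\tau_{k,j}\in[2^k,2^{k+1})$ and rescaled support in $(-1/4,1/4)$, and the independence of the characterizing quantities from the choice of $\eta$ and $\phi$) are precisely what the paper likewise delegates to \cite{gs} and to routine arguments, so your proposal is correct and essentially coincides with the paper's proof.
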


The equivalence of the two conditions on the right hand sides of
\eqref{equivlor} and  \eqref{m0} with $L^p_\rad \to L^{p,\nu}$
boundedness
(i.e. on radial functions, for $p<\frac{2d}{d+1}$) was proved in \cite{gs}.
The $L^p$  case  ($p=\nu$) for $1<p<\frac{2(d-1)}{d+1}$   was proved in \cite{hns}, moreover that
article has already $L^p\to L^{p,\nu}$ inequalities  for radial
multipliers which  are compactly supported away from the origin.

\section{Preliminaries}

The following dyadic interpolation lemma is convenient in dealing with
the short range estimation in \S\ref{mainestimate}; it is proved in \S2 of
\cite{hns}.

\begin{lemma} \label{dyadicinterpol}
Let $0<p_0<p_1<\infty$.
Let $\{F_j\}_{j\in\bbZ}$ be a sequence of measurable functions on a measure space
$\{\Omega, \mu\}$,
and let $\{s_j\}$ be a sequence of nonnegative numbers.
Assume that, for all $j$,
the inequality
\begin{equation*}
\|F_j\|_{p_\nu}^{p_\nu}\le 2^{j p_\nu} M^{p_\nu} s_j
\end{equation*}
holds  for $\nu=0$ and $\nu=1$.
Then for every $p\in (p_0,p_1)$, there is a constant $C=C(p_0,p_1,p)$ such
 that
\begin{equation*}
\Big\| \sum_j F_j\Big\|_p^p \le C^p M^p \,\sum_j 2^{jp} s_j\,.
\end{equation*}
\end{lemma}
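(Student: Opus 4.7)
I would prove this lemma by a dyadic variant of the Marcinkiewicz real interpolation argument.

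\emph{Reduction.} After dividing by $M$ we may assume $M=1$. The two-sided hypothesis together with log-convexity of $L^p$ norms immediately gives $\|F_j\|_p^p\le 2^{jp}s_j$ for every $p\in[p_0,p_1]$. In the case $p\le 1$ the quasi-subadditivity
\[
\Big\|\sum_j F_j\Big\|_p^p\,\le\,\sum_j\|F_j\|_p^p\,\le\,\sum_j 2^{jp}s_j
\]
finishes the proof. The content of the lemma therefore lies in the range $p>1$.

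\emph{Distribution function and dyadic split.} Set $H=\sum_j F_j$. The layer-cake identity
\[
\|H\|_p^p=p\int_0^\infty\alpha^{p-1}\mu\{|H|>\alpha\}\,d\alpha
\]
reduces matters to estimating the distribution function of $H$. For each $\alpha>0$ I split $H=A_\alpha+B_\alpha$ at the dyadic threshold $k(\alpha)=\lfloor\log_2\alpha\rfloor$, with $A_\alpha=\sum_{j\le k(\alpha)}F_j$ and $B_\alpha=\sum_{j>k(\alpha)}F_j$. Chebyshev at exponent $p_1>p$ applied to $A_\alpha$ and at $p_0<p$ applied to $B_\alpha$, together with Minkowski's inequality (or $p_\nu$-subadditivity if $p_\nu<1$) on each partial sum and a H\"older estimate over the index set (the key point being that the factor $2^j$ makes the relevant sum geometric, hence dominated by one boundary term), produces bounds of the form
\[
\mu\{|A_\alpha|>\alpha\}\lesssim\alpha^{-p}\sum_{j\le k(\alpha)}2^{jp}s_j,\qquad \mu\{|B_\alpha|>\alpha\}\lesssim\alpha^{-p}\sum_{j>k(\alpha)}2^{jp}s_j.
\]

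\emph{Integration.} Substituting into the layer-cake identity and exchanging the $\alpha$-integral with the $j$-sum by Fubini, each $j$ contributes an integral of $\alpha^{-1}$ over a bounded dyadic window in $\alpha$, which is $O(1)$; summing in $j$ delivers the asserted bound $\|H\|_p^p\le C^p\sum_j 2^{jp}s_j$.

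\emph{Main obstacle.} The delicate point is ensuring that the H\"older step on the index set yields integrable (not merely weak-type) contributions after the layer-cake integration. This typically requires a three-range argument in $\alpha$: the dyadic split above handles the middle range, while at the extremes one uses instead the cruder Chebyshev bounds $\|H\|_{p_\nu}^{p_\nu}/\alpha^{p_\nu}$ with $\nu=0$ (for small $\alpha$) and $\nu=1$ (for large $\alpha$), coming from the full Minkowski estimates of $\|H\|_{p_0}$ and $\|H\|_{p_1}$, to force the integrand to decay strictly faster than $\alpha^{-1}$ at both endpoints. The two-sided hypothesis at $p_0$ and $p_1$ is exactly what makes this three-range argument close with constants depending only on $p_0,p_1,p$.
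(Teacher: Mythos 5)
There is a genuine gap, and it sits exactly at the step you flag as delicate. Your intermediate distributional bounds are correct, but they cannot be integrated the way you claim: with $\mu\{|A_\alpha|>\alpha\}\lesssim\alpha^{-p}\sum_{j\le k(\alpha)}2^{jp}s_j$, a fixed $j$ contributes for \emph{every} $\alpha\ge 2^j$ (and, in the tail bound, for every $\alpha\le 2^j$), so after Fubini each $j$ is paired with $\int\alpha^{-1}\,d\alpha$ over an unbounded range, which diverges — there is no ``bounded dyadic window.'' The crude H\"older step over the index set has discarded precisely the geometric decay in $\log_2\alpha-j$ that is needed to make the layer--cake integral converge; what you have proved is only a weak-type statement at each level. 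The proposed three-range repair does not close this. The extreme ranges rely on the global norms $\|H\|_{p_0}$ and $\|H\|_{p_1}$ via Minkowski, but these are not controlled by $\sum_j 2^{jp}s_j$ and can be infinite while the right-hand side of the lemma is finite: take $s_j=2^{-jp}j^{-2}$ for $j\ge 1$ and $s_j=0$ otherwise, so that $\sum_j 2^{jp}s_j<\infty$ but $\sum_j 2^{jp_1}s_j=\infty$ (a symmetric example with $j\to-\infty$ kills the $p_0$ endpoint). Moreover, since $j$ ranges over all of $\bbZ$, there is no bounded (in logarithmic length) middle range of $\alpha$ to which the dyadic-split bounds could be confined, so the middle range alone still produces a divergent factor.

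The way to close the argument — and this is essentially the proof in \S 2 of \cite{hns}, to which the paper defers rather than reproving the lemma — is to keep the exponential decay in $|n-j|$ before applying Chebyshev. Normalize $M=1$, fix $\delta>0$ with $(1+\delta)p_0<p<(1-\delta)p_1$, and for $\alpha\approx 2^n$ use the inclusion $\{|\sum_j F_j|>2^{n+1}\}\subset\bigcup_j\{|F_j|>c_\delta\,2^{n}2^{-\delta|n-j|}\}$, where $c_\delta>0$ depends only on $\delta$. Applying Chebyshev to each $F_j$ separately, with exponent $p_1$ for $j\le n$ and $p_0$ for $j>n$, gives
\begin{equation*}
\mu\Big\{\Big|\sum_j F_j\Big|>2^{n+1}\Big\}\;\lesssim\;\sum_{j\le n}2^{-(n-j)(1-\delta)p_1}s_j\;+\;\sum_{j>n}2^{-(j-n)\,( -(1+\delta)p_0)\cdot(-1)}s_j ,
\end{equation*}
that is, the second sum is $\sum_{j>n}2^{(j-n)(1+\delta)p_0}s_j$. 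Multiplying by $2^{np}$ and summing in $n$, the choice of $\delta$ makes both geometric series in $|n-j|$ converge, yielding $\|\sum_j F_j\|_p^p\lesssim\sum_j 2^{jp}s_j$ with a constant depending only on $p_0,p_1,p$. In short: the splitting of the level must be distributed over $j$ with geometrically decaying weights (or, equivalently, the H\"older step must retain a factor $2^{-\delta'|k(\alpha)-j|}$); without that, neither your middle-range estimate nor the endpoint patches suffice. Your reduction to $M=1$, the log-convexity remark, and the trivial case $p\le 1$ are all fine.
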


We need a simple fact  about Lorentz spaces.

\begin{lemma}\label{fubinilemma}
Let $(\cX_1,\mu_1)$, $(\cX_2,\mu_2)$ be $\sigma$-finite
measure spaces,
and let
$\mu=\mu_1\times\mu_2$ be
 the product measure on $\cX_1\times\cX_2$. Then, for $1\le
p<\infty$,  $p\le \nu\le \infty$, and any $\mu$-measurable function  $G$,
\begin{equation}\label{fubinilor}
\|G\|_{L^{p,\nu}(\cX_1\times\cX_2,\mu)}\le C_{p,\nu}
\Big(\int \|G(x_1,\cdot)\|_{L^{p,\nu}(\cX_2,\mu_2)}^p d\mu_1\Big)^{1/p}.
\end{equation}
\end{lemma}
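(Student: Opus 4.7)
I will prove the estimate by passing to the distribution function and reducing the inequality to Minkowski's integral inequality on a sequence space. The starting point is the familiar dyadic characterization of the Lorentz norm: for $1\le p<\infty$ and $p\le\nu<\infty$,
\[
\|f\|_{L^{p,\nu}(\mu)}^\nu \;\approx\; \sum_{k\in\bbZ} 2^{k\nu}\, \mu\bigl(\{|f|>2^k\}\bigr)^{\nu/p},
\]
with the usual modification $\|f\|_{L^{p,\infty}(\mu)}^p \approx \sup_k 2^{kp}\mu(\{|f|>2^k\})$ when $\nu=\infty$. The equivalence constants depend only on $p$ and $\nu$.

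First I handle the endpoint cases, which are essentially free. When $\nu=p$, both sides of \eqref{fubinilor} equal $\|G\|_{L^p(\mu)}^p$ by Fubini's theorem. When $\nu=\infty$, writing $\lambda_{x_1}(k)=\mu_2(\{x_2:|G(x_1,x_2)|>2^k\})$ and using Fubini for the product distribution,
\[
\sup_k 2^{kp}\mu(\{|G|>2^k\}) \;=\; \sup_k 2^{kp}\int \lambda_{x_1}(k)\,d\mu_1 \;\le\; \int \sup_k 2^{kp}\lambda_{x_1}(k)\,d\mu_1,
\]
which is the desired estimate after taking $p$-th roots.

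For the intermediate range $p<\nu<\infty$, I set $c_k(x_1):=2^{kp}\lambda_{x_1}(k)$. Fubini's theorem gives $\mu(\{|G|>2^k\})=\int c_k(x_1)\,d\mu_1/2^{kp}$, so by the dyadic characterization the inequality \eqref{fubinilor}, raised to the power $\nu$, reduces (up to constants from the characterization) to
\[
\sum_{k\in\bbZ}\Bigl(\int c_k(x_1)\,d\mu_1\Bigr)^{\!\nu/p} \;\le\; C^\nu\Bigl(\int \Bigl(\sum_{k\in\bbZ} c_k(x_1)^{\nu/p}\Bigr)^{p/\nu}d\mu_1\Bigr)^{\!\nu/p}.
\]
Taking $(p/\nu)$-th powers recasts this as
\[
\Bigl\|\int c_\cdot(x_1)\,d\mu_1\Bigr\|_{\ell^{\nu/p}} \;\le\; C^{p}\int \bigl\| c_\cdot(x_1)\bigr\|_{\ell^{\nu/p}}\,d\mu_1.
\]
Since $\nu/p\ge 1$, this is precisely Minkowski's integral inequality for the sequence space $\ell^{\nu/p}$, and it holds with constant $1$.

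The only obstacle I can foresee is purely bookkeeping: the dyadic characterization of the Lorentz norm accumulates multiplicative constants depending on $p$ and $\nu$, which explains the appearance of $C_{p,\nu}$ on the right of \eqref{fubinilor}. No additional difficulty arises from the $\sigma$-finiteness hypothesis beyond the standard justification of Fubini-type computations; by monotone approximation one may assume $G$ is bounded and supported on a set of finite $\mu$-measure before carrying out the argument.
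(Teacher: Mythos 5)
Your argument is correct, and it is essentially the proof the paper has in mind: the paper describes its proof (deferred to \S 9 of \cite{hns}) as a Fubini-type argument combined with Minkowski's inequality in $\ell^{\nu/p}$, which is exactly your reduction via the dyadic form of the Lorentz norm, Tonelli applied to the distribution functions, and Minkowski's integral inequality in $\ell^{\nu/p}$, with the endpoints $\nu=p$ and $\nu=\infty$ handled directly.
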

The proof is a Fubini-type argument (in conjunction with Minkowski's inequality in $\ell^{\nu/p}$), we  refer to \S9 of \cite{hns}.

Finally we need a version of a restriction theorem for
multipliers due to de Leeuw.

\begin{lemma} \label{leeuwlemma}
Let $1<p<\infty$ and $1\le p\le \infty$ and let
$m$ be a bounded continuous function in $\bbR^{d+1}$. Suppose that the
  operator $f\mapsto \cF_{d+1}^{-1}[m\cF_{d+1} f]$ is bounded from
$L^{p,\nu_1}$ to $L^{p,\nu_2}$ with operator norm $B$. Let, for $\xi\in\bbR^d$,  $m_0(\xi)= m(\xi,0)$. Then there is a constant
  $C$ independent of $m$ and $f$ such that
$$\big\|\cF^{-1}_d[m_0 \cF_d f]\big\|_{L^{p,\nu_2} }\le
  CB\big\|f\big\|_{L^{p,\nu_1}}.$$
\end{lemma}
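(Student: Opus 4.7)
\smallskip
\noindent\emph{Proof plan.}
I would adapt the classical de Leeuw restriction argument to the Lorentz setting by applying $T$ to tensor products of $f$ with a bump that spreads out in the extra $\tau$-variable, then passing to the limit. Fix a nonnegative Schwartz function $g$ on $\bbR$ with $g(s)\ge 1/2$ for $|s|\le 1$, set $g_R(t)=g(t/R)$ for $R\ge 1$, and for a Schwartz function $f$ on $\bbR^d$ define $F_R(x,t)=f(x)g_R(t)$. Since $\widehat{F_R}(\xi,\tau)=\widehat f(\xi)R\widehat g(R\tau)$, substituting $\tau=\sigma/R$ in the Fourier inversion formula gives
\[
TF_R(x,Rs)=\frac{1}{(2\pi)^{d+1}}\iint m(\xi,\sigma/R)\widehat f(\xi)\widehat g(\sigma)e^{i(x\cdot\xi+s\sigma)}\,d\xi\,d\sigma,
\]
and since $m$ is bounded and continuous with $\widehat f\widehat g\in L^1$, dominated convergence yields the pointwise limit $TF_R(x,Rs)\to T_{m_0}f(x)g(s)$ as $R\to\infty$, where $T_{m_0}$ is the Fourier multiplier on $\bbR^d$ with symbol $m_0$.

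I would then pass to $L^{p,\nu_2}$ norms. Setting $H_R(x,s)=TF_R(x,Rs)$, dilation in $t$ gives $\|H_R\|_{L^{p,\nu_2}(\bbR^{d+1})}=R^{-1/p}\|TF_R\|_{L^{p,\nu_2}}$. Lorentz quasinorms are lower semicontinuous under a.e.\ convergence: Fatou applied to the distribution functions yields $F^*(s)\le\liminf F_R^*(s)$ pointwise, and then Fatou in $s$ gives the norm inequality. Applying this together with the hypothesis $\|TF_R\|_{L^{p,\nu_2}}\le B\|F_R\|_{L^{p,\nu_1}}$ gives
\[
\|T_{m_0}f\otimes g\|_{L^{p,\nu_2}(\bbR^{d+1})}\le B\liminf_{R\to\infty}R^{-1/p}\|F_R\|_{L^{p,\nu_1}(\bbR^{d+1})}.
\]
To control $\|F_R\|_{L^{p,\nu_1}}$ I would apply Lemma \ref{fubinilemma} with $t$ as the outer variable, using $\|F_R(t,\cdot)\|_{L^{p,\nu_1}(\bbR^d)}=|g_R(t)|\,\|f\|_{L^{p,\nu_1}(\bbR^d)}$ to conclude $\|F_R\|_{L^{p,\nu_1}(\bbR^{d+1})}\le C\|g_R\|_p\|f\|_{L^{p,\nu_1}}=CR^{1/p}\|g\|_p\|f\|_{L^{p,\nu_1}}$. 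Combining,
\[
\|T_{m_0}f\otimes g\|_{L^{p,\nu_2}(\bbR^{d+1})}\le CB\|g\|_p\|f\|_{L^{p,\nu_1}(\bbR^d)}.
\]

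Finally I would extract $\|T_{m_0}f\|_{L^{p,\nu_2}(\bbR^d)}$ from the left-hand side by a direct distribution-function computation: for any measurable $h$ on $\bbR^d$ one checks $\|h\otimes\indicator{[-1,1]}\|_{L^{p,\nu_2}(\bbR^{d+1})}=2^{1/p}\|h\|_{L^{p,\nu_2}(\bbR^d)}$, and since $g\ge\tfrac12\indicator{[-1,1]}$ pointwise,
\[
\|T_{m_0}f\otimes g\|_{L^{p,\nu_2}(\bbR^{d+1})}\ge 2^{1/p-1}\|T_{m_0}f\|_{L^{p,\nu_2}(\bbR^d)}.
\]
Combined with the preceding estimate this gives $\|T_{m_0}f\|_{L^{p,\nu_2}}\le C'B\|f\|_{L^{p,\nu_1}}$ for all Schwartz $f$, from which the operator-norm inequality follows. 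The main technical point is that Lemma \ref{fubinilemma} only provides an upper bound on tensor-product Lorentz norms, so the matching lower bound on $\|T_{m_0}f\otimes g\|_{L^{p,\nu_2}}$ must be produced separately; the indicator-function comparison above together with the Fatou-type lower semicontinuity of Lorentz norms is what makes the classical de Leeuw argument go through in the Lorentz setting.
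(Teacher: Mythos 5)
Your argument is correct, but it follows a genuinely different route from the paper. The paper dualizes: it writes the hypothesis as a bilinear estimate $\big|\iint m\,\widehat F\,\widehat G\big|\le B\|F\|_{L^{p,\nu_1}}\|G\|_{L^{p',\nu_2'}}$, tests it on $F_\delta=\delta\chi_\delta\otimes f$ and $G_\delta=\chi_\delta\otimes g$ with $\widehat\chi$ compactly supported, and lets $\delta\to 0$; the limit is then taken inside the pairing, where dominated convergence is immediate, and the conclusion follows from Lorentz duality. Your proof instead works on the operator side: you rescale $TF_R(x,Rs)$, identify the pointwise limit $T_{m_0}f\otimes g$, and pass to the limit in the $L^{p,\nu_2}$ quasinorm using lower semicontinuity of rearrangements under a.e.\ convergence (Fatou), then recover the $d$-dimensional norm via the exact computation $\|h\otimes\indicator{[-1,1]}\|_{L^{p,\nu_2}(\bbR^{d+1})}=2^{1/p}\|h\|_{L^{p,\nu_2}(\bbR^d)}$ and the pointwise bound $g\ge\tfrac12\indicator{[-1,1]}$. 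What each buys: the duality route needs only the tensor \emph{upper} bound and no Fatou-type argument, and the paper proves that upper bound $\|h\otimes\chi_\delta\|_{L^{q,r}}\lesssim\delta^{-1/q}\|h\|_{L^{q,r}}$ for \emph{all} secondary indices $r$ by real interpolation from the trivial case $q=r$; your route avoids invoking the duality $(L^{p,\nu_2})^*\simeq L^{p',\nu_2'}$ (a small convenience when $\nu_2=\infty$) but must separately produce the tensor \emph{lower} bound, which you do correctly. One caveat: your appeal to Lemma \ref{fubinilemma} for $\|F_R\|_{L^{p,\nu_1}}\lesssim R^{1/p}\|g\|_p\|f\|_{L^{p,\nu_1}}$ requires $\nu_1\ge p$; this covers the paper's actual applications (where $\nu_1=p$), but to get the full statement for $\nu_1<p$ you should replace that step by the paper's interpolation argument for the map $h\mapsto h\otimes g_R$, which gives the same bound for every secondary index. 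Also make sure to note the standard density argument (Schwartz functions in $L^{p,\nu_1}$, $\nu_1<\infty$) when passing from Schwartz $f$ to the operator-norm statement.
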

\begin{proof} This is just a modification of the proof given in
  \cite{jodeit}. By the hypothesis
\Be\label{multindplusone}
\Big|\iint m(\xi,\tau) \widehat F(\xi,\tau) \widehat G(\xi,\tau)
d\xi d\tau
\Big| \le B \|F\|\ci{L^{p,\nu_1}} \|G\|_{L^{p',\nu_2'}}\,.
\Ee
Now let $\chi$ be a Schwartz function on $\bbR$ whose Fourier transform
is supported in $(-1,1)$.
 so that $\widehat \chi(0)\neq 0$. Given a small
parameter $\delta$ we let $\chi_\delta(t)=\chi(\delta t)$, and, for
 $f\in L^{p,\nu_1}(\bbR^d)$, $g\in
  L^{p',\nu_2'}(\bbR^d)$ we define $F_\delta(x,t) =\delta\chi_\delta
( t)f(x)$
and $G_\delta(x,t) =\chi_\delta (t)g(x)$.
Observe that the inequality
$$\|h\otimes \chi_\delta\|_{L^{q,r}(\bbR^{d+1})} \le C(\chi)
\delta^{-1/q}
\|h\|_{L^{q,r}(\bbR^{d+1})}$$
is immediate for $q=r$, by Fubini, and then holds
 for arbitrary $r$ by real interpolation.
Thus $\|F_\delta\|_{L^{p,\nu_1}} \le \delta^{1-1/p} \|f\|_{L^{p,\nu_1}}$ and
$\|G_\delta\|_{L^{p',\nu_2'}} \le \delta^{-1/p'} \|g\|_{L^{p',\nu_2}}$.
 Apply \eqref{multindplusone}
  with $F_\delta$, $G_\delta$ and  let  $\delta\to 0$.
This yields
\begin{align*}&[\widehat \chi(0)]^2
\Big|\int m(\xi,0) \widehat f(\xi) \widehat g(\xi)
d\xi \Big|\\&=\lim_{\delta\to 0}
\Big|\iint m(\xi,\tau) \widehat f(\xi) \widehat
g(\xi)\delta^{-1}[\widehat \chi(\delta^{-1}\tau)]^2 d\xi d\tau \Big|\\
 &\le C B \|f\|_{L^{p,\nu_1}} \|g\|_{L^{p',\nu_2'}}
\end{align*}
which implies the assertion.
\end{proof}

\section{Inequalities for spherical measures}\label{prev}
We shall now derive a consequence
of Hypothesis $\text{Sph}(p_1,d)$ which will be used in
conjunctions with atomic decompositions. Similar inequalities have been used in \cite{hns} but they were derived using the proof of
\eqref{mainlpcont}
rather than \eqref{mainlpcont}  itself.

In what follows let $\ell$ be a nonnegative integer and, for 
 $\dz=(z_1,\dots, z_d)\in \bbZ^d$, let
\Be\label{Rzl}R_{\dz,\ell}= \{x\in \bbR^d: 2^{\ell}z_i\le
x_i<2^{\ell+1}z_i,
 i=1,\dots, d\}\Ee
so that the $R_{\dz,\ell}$ form a tiling of $\bbR^d$ with dyadic cubes of
sidelength $2^{\ell}$.  We denote by $\chi\ci{R_{\dz,\ell}}$ the characteristic function of $R_{\dz,\ell}$.
We denote variables in
$\bbZ^{d+1}$ by $z=(\dz,z_{d+1})$ with $\dz\in \bbZ^d$.
Let
$I_{\fz_{d+1},\ell}
= [2^{\ell}\fz_{d+1},2^{\ell+1}\fz_{d+1})$ and let \Be\label{chizl}\chi_{\fz,\ell} (x,t):=\chi\ci{R_{\dz,\ell}}(x) 
\chi\ci{I_{\fz_{d+1}},\ell}(t).\Ee
For each $r>0$, $\fz\in\bbZ^{d+1}$ we are given an $L^2(\bbR^{d+1})$ function
$g_{\fz,r}$
depending continuously on $r$ such  that
\Be\label{Gzr}\|g_{\fz,r}\|_{L^2(\bbR^{d+1})}\le 1, \quad \text{ for all $\fz,r$}. \Ee
Moreover, for each positive integer $n$ we are given an $L^1$ function
$\om_n$ supported on $[1/2,2]$ so that
\Be\label{omeganL1}\sup_n\int_{1/2}^2 |\om_n(\rho)| d\rho \le 1.
\Ee
 Let $\ell>0$.
We
define an operator $\cS_\ell$ acting on functions $F$
on $\bbZ^{d+1}\times \bbR^+$  by
\begin{multline}\label{Slext}
\cS_\ell F(x,t) =\\ \sum_{\fz}\sum_{n=\ell}^\infty \int_{2^n}^{2^{n+1}} F(\fz,r) \,
\int_{1/2}^2\om_n(\rho) \int_{\bbR^{d}}\psi*\sigma_{\rho r}(x-y)
 [\chi_{\fz,\ell}\,g_{\fz,r}](y, t-r) \,dy\, d\rho\,dr.
\end{multline}
On the set $\bbZ^{d+1}\times \bbR^+$ we define the measure
$\mu_d$ by
$$\mu_d(E)= \int_0^\infty  \sum_{\fz\in\bbZ^{d+1}:(\fz,r)\in E }
r^{d-1}dr$$
for a measurable set $E\subset \bbZ^{d+1}\times \bbR^+$.

\begin{proposition} \label{propext}
Suppose $d\ge 2$ and Hypothesis $\text{Sph}(p_1,d)$
holds for some $p_1\in (1,2)$.
Let
$g_{\fz,r}$, $\omega_n$ be  as in \eqref{Gzr}, \eqref{omeganL1}, $\ell>0$,
 and define $\cS_\ell$ by \eqref{Slext}.
Then
the inequality
\Be\label{Slineqext}
\big\|\cS_\ell F\big\|_{L^{p,\nu}(\bbR^{d+1})} \le C_{p,\nu} \,2^{\ell (d+1)(\frac 1p-\frac 12)-\alpha}
\|F\|_{L^{p,\nu}(\bbZ^{d+1}\times\bbR^+\!,\,\mu_d)}
\Ee
holds for
(i)  for  $p=1=\nu$, with $\alpha=\frac{d-1}{2}$,  (ii) for  $p=p_1=\nu$, with
$\alpha=0$ and,  (iii),  for
$$
1<p<p_1, \quad 0<\nu\le \infty \quad\text{with } \alpha
\le  \frac{d-1}2\,\frac{\tfrac {1}{p}-\tfrac {1}{p_1}}{1-\tfrac{1}{p_1}}
.$$
\end{proposition}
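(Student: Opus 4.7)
The plan is to prove the two endpoint cases (i) and (ii) directly and deduce (iii) by real interpolation. Since $(L^1, L^{p_1})_{\theta,\nu} = L^{p,\nu}$ for $1/p = 1-\theta + \theta/p_1$ and $0 < \nu \le \infty$, interpolation gives $\cS_\ell\colon L^{p,\nu}(\mu_d) \to L^{p,\nu}(\bbR^{d+1})$ with the $\ell$-exponent $(d+1)(1/p-1/2)$ and an interpolated constant of order $2^{-\alpha}$ where $\alpha = (d-1)(1-\theta)/2 = \tfrac{d-1}{2}\cdot\tfrac{1/p-1/p_1}{1-1/p_1}$, exactly (iii).

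\textbf{Case (i).} This is a direct computation. The triangle inequality and Young's inequality on the inner $y$-convolution give
\[
\|\cS_\ell F\|_1 \le \sum_{\fz}\sum_{n\ge\ell}\int_{2^n}^{2^{n+1}} |F(\fz,r)|\,\|\omega_n\|_1\,\|\psi*\sigma_{\rho r}\|_1\,\|\chi_{\fz,\ell}g_{\fz,r}\|_1\,dr,
\]
and combining $\|\psi*\sigma_r\|_1 \lesssim r^{d-1}$ with the Cauchy--Schwarz estimate $\|\chi_{\fz,\ell}g_{\fz,r}\|_1 \le |R_{\dz,\ell}\times I_{\fz_{d+1},\ell}|^{1/2}\|g_{\fz,r}\|_2 \le 2^{\ell(d+1)/2}$ produces $\|\cS_\ell F\|_1 \lesssim 2^{\ell(d+1)/2}\|F\|_{L^1(\mu_d)}$, which is (i).

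\textbf{Case (ii) and the main obstacle.} For each fixed $t$ I recast $\cS_\ell F(\cdot,t)$ in the form required by Hypothesis $\text{Sph}(p_1,d)$. The substitution $s=\rho r$ in the $\rho$-integral yields
\[
\cS_\ell F(x,t) = \int_0^\infty\!\int_{\bbR^d} \psi*\sigma_s(x-y)\,K_t(y,s)\,dy\,ds,
\]
where
\[
K_t(y,s) = \sum_{\fz}\sum_n \int_{[2^n,2^{n+1}]\cap[s/2,2s]} F(\fz,r)\,\omega_n(s/r)\,\chi_{\fz,\ell}(y,t-r)\,g_{\fz,r}(y,t-r)\,\frac{dr}{r};
\]
for each $(y,t-r)$ the indicator $\chi_{\fz,\ell}$ selects a single $\fz$. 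Hypothesis $\text{Sph}(p_1,d)$ applied for each $t$ gives $\|\cS_\ell F(\cdot,t)\|_{L^{p_1}(\bbR^d)} \lesssim \|K_t\|_{L^{p_1}(\bbR^d\times\bbR^+;\,dy\,s^{d-1}ds)}$, so after $t$-integration of $p_1$-th powers the task reduces to bounding $\iiint|K_t|^{p_1}s^{d-1}\,ds\,dy\,dt$ by $C\cdot 2^{\ell(d+1)(1-p_1/2)}\|F\|_{L^{p_1}(\mu_d)}^{p_1}$. The main obstacle is that $\omega_n$ is assumed only in $L^1$, ruling out Minkowski or Hölder separation against any non-trivial $L^q$-norm of $\omega_n$. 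The resolution is Jensen's inequality in the $r$-integration with weight $w_{n,s}(r)=|\omega_n(s/r)|/r$: after substituting $u=s/r$ one gets $\int w_{n,s}(r)\,dr \le 2\|\omega_n\|_1 \le 2$ uniformly, whence
\[
|K_t(y,s)|^{p_1} \lesssim \sum_n\int |F(\fz,r)|^{p_1}\,\chi_{\fz,\ell}(y,t-r)\,|g_{\fz,r}(y,t-r)|^{p_1}\,\frac{|\omega_n(s/r)|}{r}\,dr.
\]
Performing the $s$-integration first (using $\int|\omega_n(s/r)|\,s^{d-1}ds \le 2^{d-1}r^d$) converts the $s^{d-1}$-weight into $r^{d-1}$; integrating in $y,t$ via $t'=t-r$ partitions the space into the disjoint boxes $R_{\dz,\ell}\times I_{\fz_{d+1},\ell}$ of measure $2^{\ell(d+1)}$, on each of which Hölder gives $\iint|g_{\fz,r}|^{p_1}\,dy\,dt' \le 2^{\ell(d+1)(1-p_1/2)}\|g_{\fz,r}\|_2^{p_1} \le 2^{\ell(d+1)(1-p_1/2)}$. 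This produces the target bound and hence (ii); real interpolation between (i) and (ii) then yields (iii).
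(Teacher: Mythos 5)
Your case (ii) is essentially the paper's argument (Jensen in place of Minkowski plays the same role), and the interpolation scheme for (iii) is exactly what the paper does. However, your case (i) contains a genuine error that would break the whole interpolation.

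The bound you claim, $\|\cS_\ell F\|_1 \lesssim 2^{\ell(d+1)/2}\|F\|_{L^1(\mu_d)}$, is not statement (i). For $p=\nu=1$ the target exponent is $(d+1)(1-\tfrac12)-\tfrac{d-1}{2}=1$, i.e.\ the required bound is $2^{\ell}$, which for $d\ge 2$ is strictly smaller than $2^{\ell(d+1)/2}$. Your estimate corresponds to $\alpha=0$ at $p=1$, not $\alpha=\tfrac{d-1}{2}$, and interpolating $\alpha=0$ at both endpoints yields $\alpha=0$ throughout. The strictly positive $\alpha$ in (iii) is precisely what produces the geometric decay $2^{-\ell\alpha}$ that is summed over $\ell$ in the long-range estimate, so this loss is fatal.

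The source of the loss is that you bound the convolution with $\psi*\sigma_{\rho r}$ by Young's inequality, $\|\psi*\sigma_{\rho r}\|_1\lesssim r^{d-1}$, which is blind to the oscillation of $\widehat{\sigma}_{\rho r}$. The paper instead uses that $\psi*\sigma_{\rho r}*[\chi_{R_{\dz,\ell}}g_{\fz,r}](\cdot,t-r)$ is supported on an annulus of measure $\lesssim 2^{\ell}r^{d-1}$, applies Cauchy--Schwarz on this set to pass from $L^1$ to $L^2$, and then uses Plancherel with the decay $\|\cF_d[\psi*\sigma_{\rho r}]\|_\infty\lesssim r^{(d-1)/2}$. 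Combined with a final Cauchy--Schwarz in $t$ over the interval $I_{\fz_{d+1},\ell}$ and the normalization $\|g_{\fz,r}\|_2\le 1$, this yields $2^{\ell/2}\cdot r^{(d-1)/2}\cdot r^{(d-1)/2}\cdot 2^{\ell/2}=2^{\ell}r^{d-1}$ per term, hence $\|\cS_\ell F\|_1\lesssim 2^{\ell}\|F\|_{L^1(\mu_d)}$. The extra gain of $2^{-\ell(d-1)/2}$ over your bound comes precisely from replacing $r^{d-1}$ (the $L^1$-norm of $\psi*\sigma_{\rho r}$) with $(2^{\ell}r^{d-1})^{1/2}\cdot r^{(d-1)/2}$ (support $\times$ Fourier decay), which is genuinely smaller once $2^{\ell}\ll r$, i.e.\ in the long-range regime $n\ge \ell$.
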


\begin{proof}
Statement (iii)  follows by real interpolation from the cases  $p=\nu=p_1$
and $p=\nu=1$.

We consider the case $p=p_1=\nu$.
In order to
apply
Hypothesis $\text{Sph}(p_1,d)$ we interchange the $\rho$-  and the $r$-integrals and change variables $s=r\rho$. This yields
\begin{multline*}
\cS_\ell F(x,t) =
\sum_{\fz}
\sum_{n=\ell}^\infty \int_{\rho=1/2}^2
 \int_{s=2^n\rho}^{2^{n+1}\rho} F(\fz,\tfrac s\rho)\om_n(\rho) \,\times\\
\int_{\bbR^{d}}\psi*\sigma_{s}(x-y)
 [\chi_{\fz,\ell}\,g_{\fz,\frac s\rho}](y, t-\tfrac s\rho) \,dy\, ds
 \frac{d\rho}{\rho}\, ,
\end{multline*}
and thus
\Be \cS_\ell F(x,t) = \int_{2^{\ell-1}}^\infty
\int_{\bbR^{d}}\psi*\sigma_{s}(x-y)
\ccV_\ell F(y,s,t)
\,dy\, ds
\Ee
where
\begin{multline}\label{Velldef} \ccV_{\ell} F(y,s,t):=\\
\int_{\rho=1/2}^2
\sum_{n=\ell}^\infty \om_n(\rho) \chi\ci{[2^n\rho, 2^{n+1}\rho]}(s)
\sum_{\fz}
 F(\fz,\tfrac s\rho) \,
 [\chi_{\fz,\ell}\,g_{\fz,\frac s\rho}](y, t-\tfrac s\rho) \rho^{-1}d\rho\,.
\end{multline}

For fixed $t$ we apply
Hypothesis $\text{Sph}(p_1,d)$ and then integrate in $t$. This yields
\begin{align*}
&\|\cS_\ell F\|_{L^{p_1}(\bbR^{d+1})} =
\Big(\int_t \|\cS_\ell F(\cdot,t)\|_{L^{p_1}(\bbR^d)}^{p_1}dt\Big)^{1/p_1}
\\
&\lc \Big(\int \int_{2^{\ell-1}}^\infty \int \big|  \ccV_\ell F(y,s,t)|^{p_1}
dy \,s^{d-1}ds\, dt\Big)^{1/{p_1}}\,.
\end{align*}
We observe that if $2^\nu<s<2^{\nu+1}$ then only the terms with
$\nu-1\le n\le \nu+1$ contribute to the $n$-sum in \eqref{Velldef}.
Thus, for fixed $(y,t)$,
\begin{multline}\label{intsVellest}
\Big(\int_{2^{\ell-1}}^\infty| \ccV_\ell F(y,s,t)|^{p_1} s^{d-1} ds
\Big)^{1/p_1}
\,\le\, \sum_{i=-1,0,1}
\\
\Big(\sum_{\nu=\ell-1}^\infty \int_{2^{\nu}}^{2^{\nu+1}}
\Big|
\int_{\rho=1/2}^2
 \om_{\nu+i}(\rho)
\sum_{\fz}
 F(\fz,\tfrac s\rho) \,
 [\chi_{\fz,\ell}\,g_{\fz,\frac s \rho}](y, t-\tfrac s\rho) \frac{d\rho}{\rho}
\Big|^{p_1} s^{d-1} ds
\Big)^{1/p_1}\,.
\end{multline}
Now we have for  fixed $\nu$
\begin{align*}
&\Big(\int_{2^{\nu}}^{2^{\nu+1}}
\Big|
\int_{\rho=1/2}^2
 \om_{\nu+i}(\rho)
\sum_{\fz}
 F(\fz,\tfrac s\rho) \,
 [\chi_{\fz,\ell}\,g_{\fz,\frac s\rho}](y, t-\tfrac s\rho) \rho^{-1}d\rho
\Big|^{p_1} s^{d-1} ds
\Big)^{1/p_1}
\\
&\le \int_{1/2}^2 |\om_{\nu+i}(\rho)|
\Big(\int_{2^{\nu}}^{2^{\nu+1}}
\Big|
\sum_{\fz}
 F(\fz,\tfrac s\rho) \,
 [\chi_{\fz,\ell}\,g_{\fz,\frac s\rho}](y, t-\tfrac s\rho)
\Big|^{p_1} s^{d-1} ds
\Big)^{1/p_1}\frac{d\rho}{\rho}
\\
&\le \int_{1/2}^2 |\om_{\nu+i}(\rho)| \rho^{\frac d{p_1}-1}d\rho
\Big(\int_{2^{\nu-1}}^{2^{\nu+2}}
\Big|
\sum_{\fz}
 F(\fz,r) \,
 [\chi_{\fz,\ell}\,g_{\fz,r}](y, t-r)
\Big|^{p_1} r^{d-1} dr
\Big)^{1/p_1}
\\&\lc
\Big(\int_{2^{\nu-1}}^{2^{\nu+2}}
\Big|
\sum_{\fz}
 F(\fz,r) \,
 [\chi_{\fz,\ell}\,g_{\fz,r}](y, t-r)
\Big|^{p_1} r^{d-1} dr
\Big)^{1/p_1}\,.
\end{align*}
We insert this back into
\eqref{intsVellest}
and obtain
\begin{align*}
\Big(\int_{2^{\ell-1}}^\infty| \ccV_\ell &F(y,s,t)|^{p_1} 
s^{d-1} ds\Big)^{1/p_1}
\\
&\lc
\Big(\int_{2^{\ell-1}}^{\infty}
\Big|
\sum_{\fz}
 F(\fz,r) \,
 [\chi_{\fz,\ell}\,g_{\fz,r}](y, t-r)
\Big|^{p_1} r^{d-1} dr
\Big)^{1/p_1}
\\
&\le
\Big(\int_{2^{\ell-1}}^{\infty}
\sum_{\fz} \Big|
 F(\fz,r) \,
 [\chi_{\fz,\ell}\,g_{\fz,r}](y, t-r)
\Big|^{p_1} r^{d-1} dr
\Big)^{1/p_1}\,.
\end{align*}
We take $L^{p_1}$  norms in $(y,t)$ and
perform a shear transformation for fixed $r$ to get
\begin{multline*}
\Big(\int\int \int_{2^{\ell-1}}^\infty| \ccV_\ell F(y,s,t)|^{p_1} s^{d-1} ds\, dt\, dy\Big)^{1/p_1}
\\
\lc
\Big(\int_{2^{\ell-1}}^{\infty}
\sum_{\fz}
 |F(\fz,r)|^{p_1} \iint \,\Big|
 \chi_{\fz,\ell}\,g_{\fz,r}(y, t)
\Big|^{p_1} dt\, dy \,r^{d-1} dr
\Big)^{1/p_1}\,.
\end{multline*}
By H\"older's inequality and our normalizing assumption \eqref{Gzr}
 this is estimated by
\begin{align*}
\Big(\int_{2^{\ell-1}}^{\infty}
\sum_{\fz}
 |F(\fz,r)|^{p_1} 2^{\ell(d+1)(1-p_1/2)}\|\chi_{\fz,\ell}\,g_{\fz,r}\|_2^{p_1}
 r^{d-1} dr
\Big)^{1/p_1}&
\\
\lc  2^{\ell(d+1)(1/p_1-1/2)}
\Big(\int
\sum_{\fz}
 |F(\fz,r)|^{p_1}
 r^{d-1} dr
\Big)^{1/p_1}.&
\end{align*}
This yields the assertion for $p=p_1=\nu$, with $\alpha=0$.


For $p=1$ we 
estimate
\begin{multline*}\big\|\cS_\ell  F\big\|_{L^1(\bbR^{d+1})}
\,\lc \,\sum_{n=\ell}^\infty
\sum_{\fz} \int_{2^n}^{2^{n+1}} |F(\fz,r)|
\int_t \int_{\rho=1/2}^2|\omega_n(\rho)| \,\times
\\
\big\|\psi*\sigma_{r\rho}*
[\chi\ci{R_{\dz,\ell} }
\,g_{\fz,r}] (\cdot, t-r) \big\|_{L^1(\bbR^d)}\,d\rho\,
\chi\ci{I_{\fz_{d+1},\ell}}(t-r) \,dt \,dr.
\end{multline*}
The function
$\psi*\sigma_{r\rho}*[\chi\ci{R_{\dz,\ell} }
g_{\fz,r}] (\cdot, t-r) $ is supported on a set of measure $\le C 2^\ell
r^{d-1}$, namely an annulus of width $\lc 2^{\ell}$ built on a sphere of radius $r\rho$. Moreover
we have
$\big\|\cF_d[\psi*\sigma_{r\rho}]\big\|\le C r^{\frac{d-1}{2}}$ where $C$ is independent of $\rho\in [1/2,2]$.
Thus  the last displayed expression can be estimated by
\begin{align*}
&\sum_{\fz} \sum_{n=\ell}^\infty\int_{2^n}^{2^{n+1}} |F(\fz,r)|
\int_t \int_{\rho=1/2}^2|\omega_n(\rho)| \,2^{\ell/2} r^{(d-1)/2}
\,\times\\
&\qquad\qquad
\big\|\psi*\sigma_{r\rho}*[\chi\ci{R_{\dz,\ell} }
g_{\fz,r}] (\cdot, t-r) \big\|_{L^2(\bbR^d)}\,d\rho\,
\chi\ci{I_{\fz_{d+1},\ell}}(t-r) \,dt \,dr
\\
&\lc \sum_{\fz} \int\int_t \int_{\rho=1/2}^2|\omega_n(\rho)|d\rho \, |F(\fz,r)|  \times \\
 &\qquad\qquad 2^{\ell/2} r^{d-1}
\big\|[\chi\ci{R_{\dz,\ell} }
g_{\fz,r}] (\cdot, t-r) \big\|_{L^2(\bbR^d)}\,
\chi\ci{I_{\fz_{d+1},\ell}}(t-r) \,dt \,dr.
\end{align*}
Now  we use \eqref{omeganL1}, apply the Cauchy-Schwarz inequality in $t$
and then use the
normalizing assumption \eqref{Gzr}
to bound the last expression by
\begin{align*}
&2^{\ell/2} \sum_{\fz} \int |F(\fz,r)|
 r^{d-1}\, 2^{\ell/2} \, \times
\\
&\qquad
\Big(\int \big\|[\chi\ci{R_{\dz,\ell} }
g_{\fz,r}] (\cdot, t-r) \big\|_{L^2(\bbR^d)}^2
\chi\ci{I_{\fz_{d+1},\ell}}(t-r)\, dt \Big)^{1/2} dr
\\&\lc \,2^{\ell} \sum_{\fz} \int|F(\fz,r)|
r^{d-1} dr\,.
 \end{align*}
This gives the assertion for $p=\nu=1$, when $\alpha= \frac{d-1}2$.
\end{proof}

\section{The main estimate}\label{mainestimate}

We formulate our main estimate which will yield both  the implications
(iv)$\implies$(i) and (iv)$\implies$(ii) of Theorem \ref{dLconverse}.
In this section $\chi_1$  will be a $C^\infty$ function supported
in $(5/8, 17/8)$ and $\chi$ will be a $C^\infty$  function supported in
$(-4,4)$. We now consider the convolution operator $T$ on $\bbR^{d+1}$
with multiplier
\Be\label{multiplier}
m(\xi,\tau)=\sum_{k\in \bbZ} \chi_1(2^{-k}|\xi|)\chi(2^{-k}\tau)
\gga_k\big(\frac{|\xi|-b_k \tau}{2^k}\big).
\Ee

\begin{theorem}\label{maintheorem}
Suppose that $1<p_1<\frac{d}{d+1}$ and that Hypothesis
$\text{Sph}(p_1,d)$
holds.  Let $m$ be as in \eqref{multiplier},  with $b_k\in\bbR$ and $|b_k|\le 2$. Let $1<p<p_1$ and $p\le \nu\le\infty$ and assume that
\Be\label{Cpnu}
\fC_{p,\nu} :=
 \sup_k \biggl\|\frac{\widehat \gga_k}
 { (1+|\cdot|)^{\frac{d-1}2}}
\biggl\|_{L^{p,\nu}(\bbR,
   (1+|\cdot|)^{d-1}dr)}\, <\, \infty\,.
\Ee
Then
\Be \big\| \cF^{-1}[ m\widehat f]\big\|_{L^{p,\nu}(\bbR^{d+1})}\lc
  \fC_{p,\nu}
\|f\|_{L^{p}(\bbR^{d+1})}\,.
\Ee
\end{theorem}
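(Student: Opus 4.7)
The strategy is to use Fourier inversion of $\gga_k$ to convert the cone multiplier $T_k$ into a superposition of spherical multipliers, to which Proposition \ref{propext} applies. Writing $\gga_k(s)=(2\pi)^{-1}\int\widehat\gga_k(u)e^{ius}\,du$, we get
$$T_k f(x,t)=\frac{1}{2\pi}\int \widehat\gga_k(u)\, W_k^u f(x,t)\,du,$$
where $W_k^u$ has multiplier $\chi_1(2^{-k}|\xi|)\chi(2^{-k}\tau) e^{iu(|\xi|-b_k\tau)/2^k}$. The standard asymptotic expansion of $\widehat\sigma_r$ (stationary phase) shows that, for $|u|\gtrsim 1$, the convolution kernel of $W_k^u$ decomposes into a sum of two "traveling-wave" contributions, each of amplitude $\sim |u|^{-(d-1)/2}$, spatially concentrated near the sphere of radius $|u|/2^k$ and temporally localized at width $2^{-k}$ around $\pm b_k u/2^k$. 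The short-range piece $|u|\lesssim 1$ yields a Schwartz kernel at scale $2^{-k}$, handled by standard $L^p$ bounds.

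After the change of variables $u=2^k r$ and an isotropic rescaling which converts each $T_k$ to a universal scale-$1$ operator of the form
$$\widetilde T g(x,t)\approx \int\widehat\gga_k(2^k r)\, r^{-(d-1)/2}\,\bigl(\psi*\sigma_r*_x g(\cdot,\,t-b_k r)\bigr)(x)\,dr+(\text{opposite-wave term}),$$
the plan is to apply Proposition \ref{propext} as follows. For each dyadic scale $\ell$, atomize $g=\sum_{\fz} G_\ell(\fz) h_{\fz,\ell}$ with $h_{\fz,\ell}$ $L^2$-normalized and supported in $R_{\dz,\ell}\times I_{\fz_{d+1},\ell}$ and $G_\ell(\fz)=\|\chi_{\fz,\ell} g\|_{L^2}$. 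The portion of $\widetilde T g$ with spherical radii $r\ge 2^\ell$ then matches the structure of $\cS_\ell F$ from Proposition \ref{propext}, with $F(\fz,r)=G_\ell(\fz)\widehat\gga_k(2^k r)r^{-(d-1)/2}$ (modulo the time shift $b_k r$ instead of $r$, which for $|b_k|\in[1/2,2]$ is absorbed via the $\omega_n$-weight on the sphere radius and for smaller $|b_k|$ is handled by the essentially radial-in-$\xi$ structure of $m_k$). Proposition \ref{propext}(iii) gives a gain of $2^{-\alpha\ell}$ for $\alpha>0$ in the long-range regime; by Lemma \ref{fubinilemma} the $L^{p,\nu}(\mu_d)$-norm of $F$ factors, and after the substitution $u=2^k r$ the $r$-Lorentz norm of $\widehat\gga_k(2^k r)r^{-(d-1)/2}$ (with measure $r^{d-1}dr$) is precisely the quantity $\fC_{p,\nu}$ from \eqref{Cpnu}.

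The short-range regime $r<2^\ell$ (spherical radii below the atomic scale) is treated by elementary $L^\infty$ and $L^2$ estimates, yielding a different power of $2^\ell$ which combines with the long-range gain via the dyadic interpolation Lemma \ref{dyadicinterpol} to give, for each $k$, the uniform bound $\|T_k f\|_{L^{p,\nu}}\lc \fC_{p,\nu}\|f\|_{L^p}$. The summation over $k$ is then routine Littlewood-Paley almost-orthogonality, since the $m_k$ are supported in essentially disjoint frequency annuli $|\xi|\sim 2^k$. The main technical obstacle lies in the first step: one must precisely identify the spherical contribution with amplitude $|u|^{-(d-1)/2}$, so as to balance the weight $(1+|u|)^{(d-1)/2}$ in the hypothesis \eqref{Cpnu}; cleanly separate the two oscillating phases of $\widehat\sigma_r$; and uniformly handle all $b_k\in[-2,2]$, in particular the degenerate case of small $|b_k|$ where the cone geometry collapses toward a radial-in-$\xi$ multiplier.
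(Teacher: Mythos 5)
Your opening reduction (Fourier inversion in $\gga_k$, treating the $|s|\lesssim 1$ part as a standard singular integral, and stationary phase to replace the kernel by $2^{n(d-1)/2}\int\om_n(\rho)\,\sigma_\rho\,d\rho$ plus a negligible error) coincides with the paper's first step, and the rescaling that brings Proposition \ref{propext} into play is also the right idea. The gap is in how you feed a general function into that proposition. You atomize $g$ at a fixed spatial scale $2^\ell$ into $L^2$-normalized blocks with coefficients $G_\ell(\fz)=\|\chi_{\fz,\ell}\,g\|_{L^2}$ and then factor, via Lemma \ref{fubinilemma}, the bound into $\fC_{p,\nu}$ times $\bigl(\sum_\fz G_\ell(\fz)^p\bigr)^{1/p}$. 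But for $p<2$ the quantity $\bigl(\sum_\fz\|\chi_{\fz,\ell}\,g\|_{L^2}^p\bigr)^{1/p}$ is not controlled by $\|g\|_{L^p}$ (it can be infinite for $g\in L^p$), and frequency localization of $g$ only helps at the price of further powers of $2^\ell$. Moreover \eqref{Slineqext} is not a net gain: its prefactor $2^{\ell[(d+1)(\frac1p-\frac12)-\alpha]}$ grows exponentially in $\ell$ in the relevant range (e.g.\ for $p$ close to $p_1$ one has $\alpha$ close to $0$), so the ``$2^{-\alpha\ell}$ gain in the long-range regime'' does not by itself make the $\ell$-sum converge, and your short-range/interpolation step cannot repair a divergent long-range piece.

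This is precisely where the paper's main device, absent from your plan, enters: a Chang--Fefferman type atomic decomposition built from the nontangential Peetre maximal square function $\fS f$, with atoms $A_{k,W,j}$ supported in dyadic cubes inside Whitney cubes $W$ of $\Omega_j^*$, where $\Omega_j=\{\fS f>2^j\}$; the parameter $\ell=L(W)+k$ is the Whitney scale relative to the frequency scale, not an independent atomization of $g$, and the short/long range split is taken relative to each $W$. Lemma \ref{atomsL2bd}(ii) gives $\sum_W\meas(W)\|A_{k(W),W,j}\|_\infty^p\lc 2^{jp}\meas(\Omega_j)$, and it is this $L^\infty$ control, through $\meas(W)^{1-p/2}\|A_{k,W,j}\|_2^p\le\meas(W)\|A_{k,W,j}\|_\infty^p$, that exactly absorbs the loss $2^{\ell(d+1)(\frac1p-\frac12)}$ coming from Proposition \ref{propext}, leaving $2^{-\alpha\ell}$ to sum in $\ell$ and $\sum_j 2^{jp}\meas(\Omega_j)\lc\|\fS f\|_p^p\lc\|f\|_p^p$ to close the estimate. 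Finally, the $k$-summation is not ``routine Littlewood--Paley almost-orthogonality'': in $L^{p,\nu}$ with $\nu\neq 2$, and in particular at the weak-type endpoint $\nu=\infty$, uniform bounds on disjoint frequency annuli do not sum. The paper instead interpolates $L^1(\ell^1)\to L^1$ and $L^2(\ell^2)\to L^2$ bounds for the family $P_k$ (with Lemma \ref{fubinilemma}) to get $\bigl\|\sum_k P_kf_k\bigr\|_{L^{p,\nu}}\lc\bigl(\sum_k\|f_k\|_{L^{p,\nu}}^p\bigr)^{1/p}$, and the resulting $\ell^p$-sum over $k$ is finite only because all $k$ share the single square-function decomposition. (Your concern about small $|b_k|$ is a non-issue: the shift $b_k s$ enters only as a measure-preserving shear, uniformly for $|b_k|\le 2$.) As written, the argument does not close; the maximal-square-function atomic decomposition is the missing essential ingredient.
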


We apply the Fourier inversion formula on the real line to $\gga_k$
and get
\begin{align*}
m(\xi,\tau)&=
\sum_k \chi(2^{-k}\tau)\chi_1(2^{-k}|\xi|)
\frac{1}{2\pi} \int \widehat \gga_k(s)
e^{i2^{-k}(|\xi|-b_k\tau)s}
 ds\,.
\end{align*}

By standard singular integral theory the convolution operator with Fourier
multiplier
$$\sum_k \chi(2^{-k}\tau)\chi_1(2^{-k}|\xi|)
\frac{1}{2\pi} \int_{-2}^2 \widehat \gga_k(s)
e^{i2^{-k}(|\xi|-b_k\tau)s}\,ds
$$
is bounded on $L^p(\bbR^{d+1})$ for all $p\in (1,\infty)$.
Therefore it suffices  to consider the Fourier multiplier
\Be \label{mult2}\sum_k \chi(2^{-k}\tau)\chi_1(2^{-k}|\xi|)
\int_2^\infty
\widehat \gga_k(s)
\exp(is2^{-k}(|\xi|-\tau)) ds
\Ee and a similar multiplier involving an integration over $(-\infty,-2)$.

We note    that these multipliers  define bounded
functions. Their  $L^\infty(\bbR^{d+1})$ norms are  bounded by
\Be \label{Bconst}  \sup_k\int|\widehat\gga_k(s)|\,ds \,
\lc \,\sup_k
\biggl\|\frac{\widehat\gga_k}{(1+|\cdot|)^{\frac{d-1}{2}}}
\biggr\|_{L^{p,\infty}(\nu_d)},
\qquad p<\frac{2d}{d+1};\Ee
here  $\nu_d$
denotes the measure $$d\nu_d(s)=(1+|s|)^{d-1}ds.$$  To see
\eqref{Bconst}
note that  the function $s\mapsto
(1+|s|)^{-\frac{d-1}2}$ belongs to $L^{q,1}(\nu_d)$ if
$q>\frac{2d}{d-1}$.
 Thus, for
$p<\frac{2d}{d+1}$,
we have
\[
\int|w(s)|ds=\int \frac{|w(s)|}{(1+|s|)^{\frac{d-1}{2}}}
\frac{1}{(1+|s|)^{\frac{d-1}{2}}}  d\nu_d(s)\,\lc\,
\biggl
\|\frac{w}{(1+|\cdot|)^{\frac{d-1}{2}}}\biggl\|_{L^{p,\infty}(\nu_d)},
\]
which implies \eqref{Bconst}.

Now let  $\vth$ be a $C^\infty$-function on the real line
 supported in
$(1/8,8)$ so that $\vth(s)=1$ on $(1/5,5)$ and observe that multiplication
with $\vth(\beta|\xi|) $ does not affect $\chi_1(|\xi|)$ as
long as $1/2\le \beta\le 2$.
Thus we have to prove that the convolution operator with multiplier
\Be\label{mult3}\sum_k \chi(\tfrac{\tau}{2^{k}})\chi_1(\tfrac{|\xi|}{2^k})\sum_{n=1}^\infty
\int_{2^n}^{2^{n+1}}\vth(2^{-n} s \tfrac{|\xi|}{2^k}))
\widehat \gga_k(s)
\exp(is\tfrac{|\xi|-b_k\tau}{2^k})\, ds
\Ee is bounded on $L^p(\bbR^{d+1})$.


One can express
$\cF^{-1}_d[ \exp(\pm i |\cdot|)  \vth(2^{-n}|\cdot|)]
$ as an integral over spherical means plus an error term:

\begin{lemma} \label{avelemma}
For $n\ge 1$,
\begin{equation}\label{splitting}
\cF^{-1}_d[ e^{\pm i |\cdot|}  \vth(2^{-n}|\cdot|)]= 2^{n(d-1)/2} \int_{1/2}^2
  \om_n^\pm(\rho) \sigma_\rho d\rho\,+\, \widetilde E_n^\pm
\end{equation}
where $\om_n^\pm$ is smooth on $(1/2,2)$
\begin{equation} \label{uniformL1}
\sup_n \int|\om_n^\pm(\rho)|d\rho<\infty,
\end{equation}
and, for any $N$,
\Be\label{Endecay}|\widetilde E_n^\pm(x)|+2^{-n}|\nabla \widetilde E_n^\pm(x)| \le C_N 2^{-nN}(1+|x|)^{-N}.
\Ee
\end{lemma}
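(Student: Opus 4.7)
I will analyze $F_n^\pm(x) := \cF_d^{-1}[e^{\pm i|\cdot|}\vth(2^{-n}|\cdot|)](x)$ in polar coordinates and extract the leading-order stationary-phase contribution. Setting $r = |x|$ and writing $\xi = \lambda\omega$ gives
$$F_n^\pm(x) = (2\pi)^{-d}\int_0^\infty\lambda^{d-1}e^{\pm i\lambda}\vth(2^{-n}\lambda)\,\Phi_d(\lambda r)\,d\lambda,$$
where $\Phi_d(t) := \int_{S^{d-1}}e^{it\inn{\omega}{e_1}}d\omega = c_d\,t^{-(d-2)/2}J_{(d-2)/2}(t)$. The classical Bessel asymptotic expansion yields, for $t \ge 1$,
$$\Phi_d(t) = e^{it}A_+(t) + e^{-it}A_-(t),\qquad |\partial_t^k A_\pm(t)| \le C_k t^{-(d-1)/2 - k},$$
with $A_\pm$ possessing a full asymptotic expansion in powers of $t^{-1}$; for $t\le 2$ the function $\Phi_d$ is smooth with bounded derivatives.

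Consider the $+$ case, the $-$ case being symmetric. Inserting the expansion, the combined phases are $\lambda(1+r)$ and $\lambda(1-r)$. The first is non-stationary ($1+r\ge 1$), so repeated integration by parts in $\lambda$ produces contributions of $\wt E_n^+$-type: each step loses at most $(1+r)^{-1}$ but gains $2^{-n}$ from differentiating $\vth(2^{-n}\lambda)$. For the stationary term, substituting $u = 2^{-n}\lambda$ and extracting the leading symbol $A_-(\lambda r)\sim c(\lambda r)^{-(d-1)/2}$ yields
$$2^{n(d-1)/2}\cdot r^{-(d-1)/2}\cdot 2^n\,\wh H\bigl(2^n(r-1)\bigr),\qquad H(u) := c\, u^{(d-1)/2}\vth(u),$$
where $c$ absorbs constants; since $H \in C^\infty_c(1/8,8)$, $\wh H$ is Schwartz. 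Because $\int\om_n^+(\rho)\sigma_\rho\,d\rho$ represents the function $\om_n^+(|x|)$, it suffices to set
$$\om_n^+(\rho) := \chi_0(\rho)\,\rho^{-(d-1)/2}\cdot 2^n\,\wh H\bigl(2^n(\rho-1)\bigr),$$
with $\chi_0 \in C^\infty_c(1/2,2)$ equal to $1$ near $\rho=1$. The change of variable $s=2^n(\rho-1)$ gives $\int|\om_n^+|\,d\rho\lc \int|\wh H(s)|\,ds<\infty$ uniformly in $n$, proving \eqref{uniformL1}.

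The error $\wt E_n^\pm$ collects four pieces: (a) the non-stationary-phase contribution, bounded by integration by parts in $\lambda$; (b) the subleading terms in the asymptotic expansion of $A_\pm$, handled identically but with symbols of arbitrarily negative order (obtained by iterating the expansion to order $N$), providing extra factors $(\lambda r)^{-N}\lc 2^{-nN}r^{-N}$; (c) the cut-off tail $(1-\chi_0(\rho))\cdot 2^n\wh H(2^n(\rho-1))$, which is $O_N(2^{-nN})$ pointwise on $|\rho-1|\gc 1$ because $\wh H$ is Schwartz; and (d) the small-$r$ regime $\lambda r\lc 1$ (forcing $r\lc 2^{-n}$) where the Bessel asymptotic is unavailable but $\Phi_d$ is smooth with bounded derivatives, so integration by parts in $\lambda$ again wins $2^{-n}$ per step from $\partial_\lambda\vth(2^{-n}\lambda)$. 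The $(1+|x|)^{-N}$ decay then comes from applying $|x|^{-2}\Delta_\xi$ repeatedly to $e^{i\inn{x}{\xi}}$ in the defining integral: derivatives falling on $\vth(2^{-n}|\xi|)$ (or on the smooth Bessel-remainder symbol) produce $2^{-n}$, while $|\nabla|\xi||=1$ makes derivatives on $e^{\pm i|\xi|}$ harmless. The gradient bound in \eqref{Endecay} is proved identically, noting that $\nabla F_n^\pm$ pulls down an extra factor $i\xi\sim 2^n$.

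The main obstacle is purely organizational: arranging the Bessel asymptotic expansion, integration by parts in $\lambda$, and integration by parts in $\xi$ so that each error piece gains both $2^{-nN}$ and $(1+|x|)^{-N}$ simultaneously. All individual estimates reduce to standard (non)stationary-phase arguments for smooth, compactly supported amplitudes, so no genuinely new idea is required beyond carrying the expansion to sufficient order.
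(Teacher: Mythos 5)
Your overall route — polar coordinates, Bessel asymptotics $\Phi_d(t)=e^{it}A_+(t)+e^{-it}A_-(t)$, stationary phase at $\lambda(1-r)$, nonstationary phase elsewhere — is exactly the stationary-phase argument the paper alludes to (the paper itself only cites Lemma 10.2 of \cite{hns}). But there is a genuine gap in your item (b). You define $\om_n^+(\rho)$ using only the \emph{leading} symbol $c(\lambda r)^{-(d-1)/2}$ of $A_-$, and you consign all subleading terms of the expansion to $\wt E_n^\pm$, claiming they come with factors $(\lambda r)^{-N}\lc 2^{-nN}$. That is true only for the \emph{remainder} after truncating the expansion at order $N$; the intermediate terms do not have it. The $j$-th term contributes
\begin{equation*}
c_j\, r^{-\frac{d-1}{2}-j}\,2^{n(\frac{d-1}{2}-j)}\,2^n\,\wh{H_j}\bigl(2^n(r-1)\bigr),\qquad H_j(u)=u^{\frac{d-1}{2}-j}\vth(u),
\end{equation*}
which at points with $2^n(r-1)=O(1)$ has size $\approx 2^{n(\frac{d+1}{2}-j)}$ — for $j=1$ this is $2^{n\frac{d-1}{2}}$, not $O(2^{-nN})$ for any $N$. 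Since the decomposition \eqref{splitting} must be fixed once and for all (the same $\wt E_n^\pm$ must satisfy \eqref{Endecay} for every $N$), you cannot let the truncation order depend on $N$, and these finitely many terms cannot live in the error. As written, your $\wt E_n^\pm$ violates \eqref{Endecay} already for moderate $N$.

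The fix is routine but necessary: absorb the \emph{full} amplitude $A_\mp$ (equivalently, all terms of the expansion) into $\om_n^\pm$, not just its leading term. Each subleading piece has the same structure as your main term with an extra factor $2^{-nj}$, hence still obeys the uniform $L^1$ bound \eqref{uniformL1}; smoothness and support in $(1/2,2)$ are preserved by your cutoff $\chi_0$. Even cleaner (and closer to the ``more direct argument'' of \cite{hns}): since the kernel $F_n^\pm$ is radial, simply set $\om_n^\pm(\rho):=2^{-n(d-1)/2}\chi_0(\rho)\,F_n^\pm(\rho e_1)$ and $\wt E_n^\pm:=(1-\chi_0(|\cdot|))F_n^\pm$; then \eqref{uniformL1} follows from the single stationary-phase bound $|F_n^\pm(x)|\lc 2^{n\frac{d+1}{2}}(1+2^n\bigl||x|-1\bigr|)^{-2}$ on $1/2\le|x|\le2$, and \eqref{Endecay} from nonstationary phase off the unit sphere (your pieces (a), (c), (d) are fine for this part, as is your remark on the gradient bound). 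With that modification the proof goes through.
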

This can be proven by an application of the stationary phase method; a
more direct argument is given in   Lemma  10.2 in \cite{hns}.

From the lemma we see that the convolution operator with multiplier
\eqref{mult3} can be split as
$$\sum_k  \cK_k*f +
\sum_{n=1}^\infty \sum_{k\in \bbZ}
2^{k(d+1)}E_{n,k}(2^k\cdot)*  f$$
where the main term is obtained by
substituting   the first term in
\eqref{splitting} for $\cF^{-1}_d[ e^{\pm i|\cdot|}
  \vth(2^{-n}|\cdot|)]$ 
(\cf. \eqref{Kkdefin} below)
and thus the  rescaled term $E_{n,k}$ is given by
\begin{multline}\label{kernelerror}
 E_{n,k}(x,t) = \\
 \int_{2^n}^{2^{n+1}}\widehat\gga_k(s)
\int \zeta(x-y,t-b_k s)
s^{-d} \widetilde E_n(s^{-1}y) \,dy\, ds
\end{multline}
where $\widehat \zeta(\xi,\tau)=
\chi(\tau) \chi_1(|\xi|) $.

From \eqref{Endecay} one gets
\begin{multline*}|E_{n,k}(x,t)|+|\nabla E_{n,k} (x,t)| \,\le
 C_{N,j} 2^{-nN} \,\times \\
 \int_{s=2^n}^{2^{n+1}} |\widehat\gga_k(s)|
\int (1+|x-y|+ |t-b_ks|)^{-d-2} (1+2^{-n}|y|)^{-N/2}dy\,
ds.
\end{multline*}
We use
$(1+|x-y|+ |t-b_ks|)^{-d-2} \lc
(\frac{1+|y|+|b_ks|}{1+|x|+|t|})^{d+2}$ and since  $|b_k|\le 2$
this implies (assuming $N$ is chosen sufficiently large, say $N>10 d$)
$$|E_{n,k}(x,t)|+
|\nabla E_{n,k} (x,t)| \lc \|\widehat \gga_k\|_{L^1(\bbR)} \,2^{-n} (1+|x|+|t|)^{-d-2}.$$
From this estimate it follows easily that
the operator
$\cE_n$ defined by
$$\cE_n f=
\sum_{k\in \bbZ}
2^{k(d+1)}E_{n,k}(2^k\cdot)*  f$$
is a Calder\'on-Zygmund operator which is bounded on $L^p(\bbR^{d+1})$ and the sum of the operator norms $\sum_{n=1}^\infty \|\cE_n\|_{L^p\to L^p}$ is
bounded by a constant only depending on $p$.

We now consider the main term. This is  the operator
of convolution on $\bbR^{d+1}$ with the kernel
$\sum_k \cK_k$ where
\begin{multline}\label{Kkdefin}
\cF_{d+1} [\cK_k] (\xi,\tau)=
\chi_1(2^{-k}|\xi|)\chi(2^{-k}\tau) \,\times \\
\sum_{n=1}^\infty 2^{n\frac{d-1}{2}}
\int_{2^n}^{2^{n+1}}\widehat \gga_k(s)e^{-ib_k s2^{-k}\tau}
\int_{1/2}^2 \om_n(\rho)
\cF_d[\sigma_\rho](2^{-k}s\xi)\,
d\rho\, ds\,.
\end{multline}

%

We now let $\psi_\circ$, $\psi$ be $C^\infty_0$-functions as defined in the
introduction
and define $\eta_\circ\in \cS(\bbR^{d+1})$ by
$$\widehat \eta_\circ(\xi,\tau)=
\frac{\chi_1(|\xi|)\chi(\tau)}{(\widehat \psi_\circ(\xi))^4}
=\frac{\chi_1(|\xi|)\chi(\tau)}{(\widehat \psi(\xi))^2}\,.
$$
Define the dyadic Littlewood-Paley operator
$L_k$ by $$\cF_{d+1}[L_k f](\xi,\tau)=
\widehat \eta_\circ(2^{-k}\xi,2^{-k}\tau)
\cF_{d+1} [f](\xi,\tau)\,
.$$
Then
$$\cK_k*f(x,t)= \int_2^\infty \int 2^{kd} H_{k,s}(2^k(x-y)) P_k L_kf
(y, t-2^{-k} b_k s)\,dy\, ds$$
where \begin{equation} \label{Pkdefinition} \cF_d[P_k g](\xi)= \psi(2^{-k}\xi) \cF_d g(\xi)
\end{equation}
 and
\begin{align} \notag
H_{k,s }(x)&= \sum_{n=1}^\infty 2^{n\frac{d-1}{2}}\widehat\gga_k(s)
\chi_{[2^n, 2^{n+1})}(s)
\int_{\rho=1/2}^2 \omega_n(\rho) [\psi* s^{-d}\sigma_\rho(s^{-1}\cdot)]d\rho
\\ \label{Hks}
&=\sum_{n=1}^\infty \widehat\gga_k(s)\chi_{[2^n, 2^{n+1})}(s) 2^{n\frac{d-1}{2}}
s^{1-d}
\int_{\rho=1/2}^2 \omega_n(\rho)
 [\psi* \sigma_{\rho s}]d\rho\,;
\end{align}
 in \eqref{Hks} the
$*$ is used  for convolution in $\bbR^d$.

\medskip

\noi {\bf Atomic decompositions.}
As in \cite{hns} we use atomic decompositions constructed from
a nontangential Peetre type  maximal square function
({\it cf.} \cite{peetre}, \cite{triebel}
and \cite{se-stud}),
$$
\fS f (x,t) = \Big(\sum_k \sup_{|(y,s)|\le 100 (d+1)\cdot 2^{-k}}
|L_k f(x+y, t+s)|^2\Big)^{1/2}.
$$
Then $\|\fS f\|_p\le C_p \|f\|_p$ for $1<p<\infty$.

For fixed $k$, we tile $\bbR^{d+1}$ by the dyadic cubes of sidelength
$2^{-k}$.
We  write $L(Q)=-k$
if we want to indicate that the sidelength of a dyadic cube is $2^{-k}$.
For each integer $j$, we introduce the set
$$\Omega_j=\{(x,t): \fS f(x,t)>2^j\}.$$ Let $\cQ^k_j$ be the set
of all dyadic cubes  of sidelength $2^{-k}$ which have the property that
$|Q\cap \Omega_j|\ge |Q|/2$ but
$|Q\cap \Omega_{j+1}|<|Q|/2$.
We also set
$$ \Omega_j^*= \{(x,t): M \chi\ci{\Omega_j}(x,t)>100^{-d-1}\}$$
where $M$ is the Hardy-Littlewood maximal operator. $\Omega_j^*$ is an open set
containing $\Omega_j$ and $|\Omega_j^*|\lc |\Omega_j|$.

Let $\cW_j$ is the set of all dyadic cubes $W$ for which the $20$-fold dilate of $W$ is contained in $\Omega_j^*$
and $W$ is maximal with respect to this property.
Clearly the interiors of these cubes are disjoint and we shall refer
to them as Whitney cubes for $\Omega_j^*$.
For such a Whitney cube $W\in\cW_j$ we denote by $W^*$ the tenfold
dilate of $W$, and observe that the family of dilates $\{W^*: W\in
\cW_j\}$ have bounded overlap.

Note that each $Q\in \cQ^k_j$ is contained in a unique $W\in \cW_j$.
For each
$W\in \cW_j$,  set
$$A_{k,W,j} = \sum_{\substack{Q\in\cQ^k_j\\Q\subset W}} L_kf \chi\ci Q;$$
note that only terms with $L(W)+k\ge 0$ occur.
Since any
any dyadic  cube $W$ can be a
Whitney cube for several $\Omega_j^*$ we also define  ``cumulative atoms'',
$$A_{k,W}=\sum_{j: W\in \cW_j} A_{k,W,j}.$$

Standard facts about these atoms are summarized in
\begin{lemma}
\label{atomsL2bd}
For each $j\in \bbZ$ the following inequalities hold.

(i)
$$
\sum_{W\in \cW_j} \sum_{k}\|A_{k,W,j}\|_2^2 \lc 2^{2j} \meas(\Omega_j).
$$

(ii) There is a constant $C_d$ such that for every  assignment
$W\mapsto k(W)\in\bbZ$,  defined for  $W\in \cW_j$,
and for
$0\le p\le 2$,
$$
\sum_{W\in \cW_j} \meas(W)\|A_{k(W),W,j}\|_\infty^p \le C_d 2^{pj} \meas(\Omega_j).
$$
\end{lemma}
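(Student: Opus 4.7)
My plan is to prove (i) by rewriting the double sum as a single integral with a carefully controlled integrand, and to prove (ii) by a direct $L^\infty$ bound on each $A_{k(W),W,j}$.

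For (i), since the cubes in $\cQ^k_j$ are pairwise disjoint for each fixed $k$, and since each $Q\in\cQ^k_j$ is contained in a unique Whitney cube $W\in\cW_j$, I would first observe that
\[
\sum_{W\in \cW_j}\|A_{k,W,j}\|_2^2=\sum_{Q\in\cQ^k_j}\int_Q|L_kf|^2 = \int_{E_j^k}|L_kf|^2,\qquad E_j^k:=\bigcup_{Q\in\cQ^k_j} Q.
\]
The defining condition $|Q\cap\Omega_j|\ge|Q|/2$ and the Hardy--Littlewood property of $\Omega_j^*$ force $E_j^k\subset\Omega_j^*$, so the left side of (i) equals $\int_{\Omega_j^*}\sum_k|L_kf|^2\chi_{E_j^k}$. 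Since $|\Omega_j^*|\lc|\Omega_j|$, everything reduces to the pointwise bound
\[
\sum_k|L_kf(x,t)|^2\chi_{E_j^k}(x,t)\,\lc\, 2^{2j}\qquad \text{a.e. }(x,t).
\]

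This pointwise bound is the main obstacle and is where the nontangential nature of $\fS$ has to be exploited. My proposed argument: fix $(x,t)$ and let $k_*=k_*(x,t)$ be the largest $k$ such that $(x,t)\in E_j^k$ (if none exists, the bound is trivial). Let $Q_*\in\cQ^{k_*}_j$ be the cube of sidelength $2^{-k_*}$ containing $(x,t)$. Because $|Q_*\setminus\Omega_{j+1}|\ge|Q_*|/2>0$, there is a point $(y_*,s_*)\in Q_*$ with $\fS f(y_*,s_*)\le 2^{j+1}$. Now for every $k\le k_*$, the dyadic cube of sidelength $2^{-k}$ containing $(x,t)$ also contains $Q_*$ (and hence $(y_*,s_*)$), giving $|(x-y_*,t-s_*)|\le\sqrt{d+1}\,2^{-k_*}\le 100(d+1)\,2^{-k}$. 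Plugging this into the nontangential supremum in the definition of $\fS$ yields
\[
\sum_{k\le k_*}|L_kf(x,t)|^2\,\le\, \sum_k\sup_{|(y,s)|\le 100(d+1)2^{-k}}|L_kf(y_*+y,s_*+s)|^2 =\fS f(y_*,s_*)^2\le 2^{2(j+1)},
\]
and by the maximality of $k_*$, $(x,t)\in E_j^k$ forces $k\le k_*$. The pointwise bound follows, and (i) is proved.

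For (ii), the same reasoning — applied with any single point in $Q\setminus\Omega_{j+1}$, for any $Q\in\cQ^k_j$ — yields the uniform $L^\infty$ estimate $\|L_kf\|_{L^\infty(Q)}\le 2^{j+1}$. Consequently $\|A_{k(W),W,j}\|_\infty\le 2^{j+1}$ for every $W\in\cW_j$ and every assignment $W\mapsto k(W)$, so
\[
\sum_{W\in\cW_j}|W|\,\|A_{k(W),W,j}\|_\infty^p\,\le\, 2^{p(j+1)}\sum_{W\in\cW_j}|W|\,\lc\, 2^{pj}\meas(\Omega_j),
\]
using disjointness of the Whitney cubes inside $\Omega_j^*$ and $|\Omega_j^*|\lc|\Omega_j|$. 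The range $0\le p\le 2$ needs no interpolation, as the above bound is actually uniform in $p\ge 0$. The key conceptual point throughout is that a single ``good'' point $(y_*,s_*)$ outside $\Omega_{j+1}$ simultaneously controls $|L_kf(x,t)|$ at every scale $k\le k_*$ — this is precisely what the Peetre-type nontangential supremum was designed to do.
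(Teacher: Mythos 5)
Your proposal is in substance the standard argument (the paper itself does not prove this lemma; it cites Lemma 7.1 of \cite{hns}), and part (ii) as you give it is complete: any point of $Q\setminus\Omega_{j+1}$, for $Q\in\cQ^k_j$, lies within distance $\sqrt{d+1}\,2^{-k}\le 100(d+1)2^{-k}$ of every point of $Q$, so $\|L_kf\|_{L^\infty(Q)}\le 2^{j+1}$, and then disjointness of the Whitney cubes inside $\Omega_j^*$ together with $\meas(\Omega_j^*)\lc\meas(\Omega_j)$ gives the claim uniformly in the assignment $W\mapsto k(W)$ and in $0\le p\le 2$.

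There is, however, one slip in part (i): the ``largest $k$ with $(x,t)\in E_j^k$'' need not exist. Since $\Omega_j$ is open and almost every point of the complement of $\Omega_{j+1}$ is a point of density zero of $\Omega_{j+1}$, for a.e. $(x,t)\in\Omega_j\setminus\Omega_{j+1}$ the dyadic cube of sidelength $2^{-k}$ containing $(x,t)$ satisfies both defining conditions of $\cQ^k_j$ for \emph{all} sufficiently large $k$; so the set of relevant $k$ is typically unbounded above exactly on a set where you need the pointwise bound. The repair is a one-line truncation: for each finite $K$ let $k_*$ be maximal among $k\le K$ with $(x,t)\in E_j^k$; your choice of $(y_*,s_*)\in Q_*\setminus\Omega_{j+1}$ then controls all relevant $k\le K$ at once, giving $\sum_{k\le K}|L_kf(x,t)|^2\chi_{E_j^k}(x,t)\le 2^{2(j+1)}$ uniformly in $K$, and one lets $K\to\infty$. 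Alternatively, the usual proof avoids selecting a single good point: for each $Q\in\cQ^k_j$ one uses $|Q\setminus\Omega_{j+1}|\ge|Q|/2$ to get $\int_Q|L_kf|^2\le 2\int_{Q\setminus\Omega_{j+1}}\sup_{|(u,v)|\le 100(d+1)2^{-k}}|L_kf(y+u,s+v)|^2\,dy\,ds$, and summing over $Q$ and $k$ bounds the left side of (i) by $2\int_{\Omega_j^*\setminus\Omega_{j+1}}(\fS f)^2\le 2^{2j+3}\meas(\Omega_j^*)\lc 2^{2j}\meas(\Omega_j)$. With either fix your conclusion stands; the rest of your argument (disjointness of the cubes in $\cQ^k_j$, uniqueness of the containing Whitney cube, $E_j^k\subset\Omega_j^*$) is correct.
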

For the proof see Lemma 7.1 in \cite{hns} (or related statements in
\cite{crf}, \cite{se-stud}).

With this notation it is now our task to show the inequality
\begin{equation}\label{all}
\Big\|\sum_k
\sum_j \sum_{\ell\ge 0}\sum_{\substack{W\in \cW_j\\
L(W)=-k+\ell}}
\cK_k
* A_{k,W,j}\Big\|_{L^{p,\nu} }  \lc \fC_{p,\nu} \|\fS f\|_p\,.
\end{equation}

Let
\Be\label{fV}
\fV_{k,s,W,j}(x,t)\,:=\,
\int 2^{kd} H_{k,s}(2^k(\cdot-y)) A_{k,W,j}
(y, t-2^{-k}b_k s)\,dy\,, 
\Ee
with $H_{k,s}$ in \eqref{Hks} and note that
\[
\cK_k
* A_{k,W,j} = P_k\Big[\int_2^\infty \fV_{k,s,W,j}\,ds\Big],
\]
with $P_k$ in \eqref{Pkdefinition}.

The estimate \eqref{all} follows then from
a  short range  and a long range inequality. The  short range
inequality
is
\begin{multline}
\label{shortr}
\Big\|\sum_k
\sum_j \sum_{\ell\ge 0}\sum_{\substack{W\in \cW_j\\
L(W)=-k+\ell}}
P_k\big [
\int_2^{2^{\ell}} \fV_{k,s,W,j}\,
ds \,\big]
\Big\|_{L^{p}} \\ \lc \,\sup_k \big\|\widehat \gga_k\big\|_{L^1(\bbR)}
\,\|\fS f\|_p, \qquad 1<p<2,
\end{multline}
and  implies the analogous $L^p\to L^{p,\nu}$ estimate since by
assumption $\nu\ge p$. Recall  that
$\sup_k \big\|\widehat \gga_k\big\|_{L^1(\bbR)}\lc
   \fC_{p,\infty}\lc
 \fC_{p,\nu}$ for
$p<\frac{2d}{d+1}$, {\it cf.} \eqref{Bconst}.

The  long range inequality is
\begin{multline}
\label{longr}
\Big\|\sum_k
\sum_j \sum_{\ell\ge 0}\sum_{\substack{W\in \cW_j\\
L(W)=-k+\ell}}
P_k\big [
\int_{2^{\ell}}^\infty
\fV_{k,s,W,j}\,
ds \,\big]
\Big\|_{L^{p,\nu} }  \lc \fC_{p,\nu} \|Sf\|_p\,.
\end{multline}

\medskip

\noi{\bf The short range estimate.} Since $\sum_j 2^{jp}
\meas(\Om_j)\lc\|\fS f\|_p^p$ it suffices
by Lemma \ref{dyadicinterpol} to show that for fixed $j$
and  $1<p<2$
\begin{multline}
\label{shortrfixedj}
\Big\|\sum_k
 \sum_{\ell\ge 0}\sum_{\substack{W\in \cW_j\\
L(W)=-k+\ell}}
P_k\big [
\int_2^{2^{\ell}}  \fV_{k,s,W,j}\,
ds \big]
\Big\|_{L^{p}}^p \\ \lc \,\sup_k \big\|\widehat \gga_k\big\|_{L^1(\bbR)}^p
\,2^{jp} \meas (\Omega_j).
\end{multline}
Here we estimate an expression which is supported  in
$\Omega_j^*$.
Thus the left hand side of \eqref{shortrfixedj} is dominated by
\begin{multline}
\label{shortrfixedjL2}
\meas (\Omega_j^*)^{1-p/2}\Big\|\sum_k
 \sum_{\ell\ge 0}\sum_{\substack{W\in \cW_j\\
L(W)=-k+\ell}}
P_k\big [
\int_2^{2^{\ell}} \fV_{k,s,W,j}\,
ds\,\big]
\Big\|_{L^{2}}^p
\end{multline}
which by the almost orthogonality of the operators $P_k$ is dominated
by a constant times
\begin{multline}
\label{shortrfixedjL2orth}
\meas (\Omega_j^*)^{1-p/2}\Big(\sum_k\Big\|
 \sum_{\ell\ge 0}\sum_{\substack{W\in \cW_j\\
L(W)=-k+\ell}}
\int_2^{2^{\ell}} \fV_{k,s,W,j}
\, ds
\Big\|_{L^{2}}^2\Big)^{p/2}\,.
\end{multline}
Now,  for fixed $W$ with $L(W)=-k+\ell$,  and  for every $s\le 2^\ell$,
the expression
$\fV_{k,s,W,j}$ is supported in the expanded cube $W^*$.
The
cubes $W^*$ with $W\in \Omega_j$  have bounded overlap, and therefore
the expression
\eqref{shortrfixedjL2orth} is dominated
by a constant times
\begin{multline}
\label{shortrsumW}
\meas (\Omega_j^*)^{1-p/2}
\Big(\sum_k\sum_{\ell\ge 0}\sum_{\substack{W\in \cW_j\\
L(W)=-k+\ell}}
\Big\|
 \int_2^{2^{\ell}} \fV_{k,s,W,j}
\, ds
\Big\|_{L^{2}}^2\Big)^{p/2}\,.
\end{multline}
Now we have for fixed $W$
\begin{align*}
&\Big\|
 \int_2^{2^{\ell}} \fV_{k,s,W,j}\, ds
\Big\|_{L^{2}(\bbR^{d+1})}\\
&\quad \lc \int_2^{2^\ell} \Big(\int
\big\|
2^{kd} H_{k,s}(2^k\cdot) * A_{k,W,j}(\cdot, t-2^{-k} b_k s)
 \big\|_{L^2(\bbR^d)}^2 dt\Big)^{1/2}\, ds
\\
&\quad \lc \int_2^{2^\ell} \Big(\int
\big\|
\cF_d[ H_{k,s}]\big\|_\infty^2 \big\| A_{k,W,j}(\cdot, t-2^{-k} b_k s)
 \big\|_{L^2(\bbR^d)}^2 dt\Big)^{1/2} ds
\\
&\quad=  \int_2^{2^{\ell}} \big \|\cF_d[ H_{k,s}]\big\|_\infty\, ds \, \|A_{k,W,j}\|_{L^2(\bbR^{d+1})}
\end{align*}
 and
\begin{multline*}
\int_2^{2^{\ell}} \big \|\cF_d [H_{k,s}]\big\|_\infty  ds
\lc\\ \sum_{n=1}^{\ell-1}\int_{2^n}^{2^{n+1}}
| \widehat\gga_k(s)| 2^{n\frac{d-1}{2}}
s^{1-d}\int_{\rho=1/2}^2 \omega_n(\rho) \big\|\F_d[\psi* \widehat
\sigma_{\rho s}]
\big\|_\infty \,d\rho\, ds\,.
\end{multline*}
Since
$\F_d[\psi* \widehat
\sigma_{\rho s}](\xi)= O(2^{n(d-1)/2})$ uniformly in $\rho\in (1/2,2)$
and $s\in[2^n,2^{n+1}]$. Since $\sup_n \|\om_n\|_1\le 1$  we get
\[ \int_2^{2^{\ell}}  \big\|\cF_d [H_{k,s}]\big\|_\infty ds \lc \int|
\widehat\gga_k(s)| ds\,
\]
and thus
\Be \label{fVest}
\Big\|
 \int_2^{2^{\ell}} \fV_{k,s,W,j}
\, ds
\Big\|_{L^{2}} \lc \int|
\widehat\gga_k(s)| ds\,
\| A_{k,W,j}
\|_2\,.
\Ee
We use this estimate in  \eqref{shortrsumW}.
By  Lemma \eqref{atomsL2bd} we have
\[\sum_k\sum_{\ell\ge 0}\sum_{\substack{W\in \cW_j\\
L(W)=-k+\ell}}
\big\| A_{k,W,j}\big\|_2^2 \lc
\sum_{W\in \cW_j}\sum_k\| A_{k,W,j}\big\|_2^2 \lc
2^{2j}\meas(\Omega_j).\]
We combine this with \eqref{fVest}. Since $\meas(\Omega_j^*)\lc
\meas(\Omega_j)$ it follows that the right hand side of
\eqref{shortrsumW}
is dominated by a constant times
\[ \Big[\sup_k \int|\widehat\gga_k(s)|\,ds\Big]^p \,  \meas(\Omega_j) 2^{jp}\]
which then yields \eqref{shortrfixedj} and finishes the proof of the
short range estimate.

\medskip

\noi{\bf The long range estimate.}
It is now advantageous to use the cumulative atoms $A_{k,W}$.
If we let
\Be\label{fVsumj}
\fV_{k,s,W}(x,t)\,:=\,
\int 2^{kd} H_{k,s}(2^k(\cdot-y)) A_{k,W}
(y, t-2^{-k} b_ks)\,dy\,
\Ee
then $\fV_{k,s,W}\,=\sum_j \fV_{k,s,W,j}$ and we have to show
\begin{multline}
\label{longrsumj}
\Big\|\sum_k
\sum_{\ell\ge 0}\sum_{\substack{W:\\
L(W)=-k+\ell}}
P_k\big [
\int_{2^{\ell}}^\infty
\fV_{k,s,W}
ds \,\big]
\Big\|_{L^{p,\nu} }^p  \lc \fC_{p,\nu}^p\sum_j\meas(\Omega_j) 2^{jp}\,.
\end{multline}
By Minkowski's inequality  this follows from  estimates for fixed
$\ell>0$, with exponential decay:
\begin{multline}
\label{longrsumjell}
\Big\|\sum_k
\sum_{\substack{W:\\
L(W)=-k+\ell}}
P_k\big [
\int_{2^{\ell}}^\infty
\fV_{k,s,W}
ds \,\big]
\Big\|_{L^{p,\nu} }  \\
\lc \fC_{p,\nu} 2^{-\ell\alpha(p)}
 \Big(\sum_j\meas(\Omega_j) 2^{jp}\Big)^{1/p}\,.
\end{multline}
Here $\alpha(p)>0$ for $p<p_1$ (in fact $\alpha$ will be as in Proposition \ref{propext}).

We interpolate an $L^1(\ell^1)\to L^1$ inequality and an
$L^2(\ell^2)\to L^2$ inequality for the operators $P_k$.
Let $\fm$ denote the measure on $\Bbb R^{d+1}\times \bbZ$ defined as
 the product measure of Lebesgue measure on $\bbR^{d+1}$ and
counting measure on $\bbZ$. Define for (suitable) functions $h$ on
$\bbR^{d+1}\times \bbZ$ an operator $P$ by  $P h(x,t)=\sum_k \F^{-1}_d
[\widehat \psi(2^{-k}\cdot) \F_d h(\cdot,k)](x,t)
$. Then $P$ maps
$L^1(\Bbb R^{d+1}\times \bbZ,\fm)$ to $L^1(\bbR^{d+1})$
 and by orthogonality
$L^2(\Bbb R^{d+1}\times \bbZ,\fm)$ to $L^2(\bbR^{d+1})$; thus for
 $1<p<2$
\[
\|Ph\|_{L^{p,\nu} (\bbR^{d+1})} 
\lc
\big\|h\big\|_{L^{p,\nu}
(\Bbb R^{d+1}\times \bbZ,\fm)}\,.
\]
Now by Lemma \ref{fubinilemma}
this also implies, under the additional
restriction $\nu\ge p$,
\[\Big\|\sum_k P_kf_k\Big\|_{L^{p,\nu} (\bbR^{d+1})} \lc
\Big(\sum_k
\| f_k\|_{L^{p,\nu} (\bbR^{d+1})}^p\Big)^{1/p}.
\]
 Using this inequality we see that
\eqref{longrsumjell} follows from
\begin{multline}
\label{longrsumjelllp}
\sum_k\Big\|
\sum_{\substack{W:\\
L(W)=-k+\ell}}
\int_{2^{\ell}}^\infty
\fV_{k,s,W}
ds \,
\Big\|_{L^{p,\nu} }^p  \lc \fC_{p,\nu}^p 2^{-\ell p\alpha(p)}
 \sum_j\meas(\Omega_j) 2^{jp}\,.
\end{multline}

We need to  rewrite $\int_{2^{\ell}}^\infty
\fV_{k,s,W}\,ds \,$ and also scale it
in order to apply Hypothesis $\text{Sph}(p_1,d)$ (or rather
 its consequence stated as Proposition \ref{propext}).
Note that
\[\fV_{k,s,W}(x,t)=\int_{2^\ell}^\infty \int H_{k,s}(2^k x-y) A_{k,W}
(2^{-k}y, 2^{-k}(2^k t-b_ks)) \,dy\,ds.\]
If we set
$$
a_{k,W}(y,u)=A_{k,W}(2^{-k}y, 2^{-k}u)
$$
and
$$
\fv_{k,s,W}(x,t)=
\int H_{k,s}( x-y) a_{k,W}
(y, t-b_ks) \,dy\,ds
$$
then
$ \fV_{k,s,W}(x,t)= \fv_{k,s,W}(2^kx, 2^kt)$
and of course we have
\Be
\label{LpnunormofintfV}
\biggl\|
\sum_{\substack{W:\\
L(W)=-k+\ell}}
\int_{2^{\ell}}^\infty
\fV_{k,s,W}
ds \,
\biggr\|_{L^{p,\nu} }= 2^{-k(d+1)/p}
\biggl\|
\sum_{\substack{W:\\
L(W)=-k+\ell}}
\int_{2^{\ell}}^\infty
\fv_{k,s,W}
ds \,
\biggr\|_{L^{p,\nu} }^p\,.
\Ee
Next (with $*$ denoting convolution in $\bbR^d$)
\begin{multline} \label{smallfv}\int_{2^\ell}^\infty
\fv_{\ka, r,W}(x,t) dr =\\
\int_{1/2}^2 \sum_{n=\ell}^\infty \om_n(\rho)
\int_{2^n}^{2^{n+1}}
\widehat \gga_k(r) r^{1-d} 2^{n(d-1)/2}
[\psi* \sigma_{\rho  r} *a_{k,W}](x, t-b_kr)\, dr\,d\rho\,.
\end{multline}

We are now in the position to apply Proposition \ref{propext},
with the choice of
\begin{multline*}
F(\fz,r)\equiv  F_{k,\ell}(\fz,r)\,=\, \\ \sum_{n=\ell}^\infty\widehat \gga_k(r) r^{1-d} 2^{n\frac{d-1}2}
\chi_{[2^n, 2^{n+1}]}(r)
\Big\|\sum_{W: L(W)=\ell-k} A_{k,W}(2^{-k }\cdot)\chi_{z,\ell}
\Big\|_2\,.
\end{multline*}

The sum in $W$  collapses as for given $\fz=(\dz,z_{d+1})$ there is a unique dyadic
 cube $W$ of sidelength $2^{\ell-k}$ so that the 
 dyadic cube $2^k W=\{2^k (x,t): (x,t)\in W\}$ is equal to
$R_{\dz,\ell}\times I_{\fz_{d+1},\ell}$.
Also observe the pointwise estimate
$$
 \sum_{n=\ell}|\widehat \gga_k(r)|
r^{1-d} 2^{n\frac{d-1}2}
\chi_{[2^n, 2^{n+1}]}(r) \lc \frac{|\widehat \gga_k(r)|}{(1+|r|)^{\frac{d-1}{2}}}.
$$

We now proceed to finish the proof of \eqref{longrsumjelllp}.
By Proposition \ref{propext} and the Fubini-type Lemma \ref{fubinilemma}
we get from \eqref{smallfv}
\begin{align}\notag
&\Big\|
\sum_{\substack{W:\\
L(W)=-k+\ell}}
\int_{2^{\ell}}^\infty
\fv_{k,s,W}
ds \,
\Big\|_{L^{p,\nu} } \lc
\,2^{\ell ((d+1)(\frac 1p-\frac 12)-\alpha)}
\|F_{k,\ell} \|_{L^{p,\nu}(\bbZ^{d+1}\times\bbR^+\!,\,\mu_d)}
\\
\label{summinginW}
&\quad\lc 2^{\ell ((d+1)(\frac 1p-\frac 12)-\alpha)}
\biggl\|
\frac{|\widehat \gga_k|}{(1+|\cdot|)^{\frac{d-1}{2}}}
\biggr\|_{L^{p,\nu}(\bbR, (1+|r|)^{d-1}dr)}\quad \times
\\\notag  &\qquad\qquad
\Big(\sum_{\substack{W:\\
L(W)=-k+\ell}}\big\|A_{k,W}(2^{-k}\cdot)\big\|_2^p\Big)^{\frac 1p}
\end{align}
where $\alpha$ is as in Proposition \ref{propext}. 
Combining \eqref{LpnunormofintfV}
and \eqref{summinginW} we obtain after a change of variables
\begin{multline} \label{fixedkestimate}
\Big\|
\sum_{\substack{W:\\
L(W)=-k+\ell}}
\int_{2^{\ell}}^\infty
\fV_{k,s,W}
ds \,
\Big\|_{L^{p,\nu} }\\
\lc \, \fC_{p,\nu} 2^{-\ell\alpha}
2^{(\ell-k) (d+1)(\frac 1p-\frac 12)}
\Big(\sum_{\substack{W:\\
L(W)=-k+\ell}}\big\|A_{k,W}\big\|_2^p\Big)^{1/p}\,.
\end{multline}

Note that for fixed $k$ and $W$,
 the functions $ A_{k,W,j}$ live on disjoint sets (since the dyadic
 cubes of sidelength  $2^{-k}$ are disjoint and each such cube
 is in exactly one
 family $\cQ_j^k$).
Thus
$\|A_{k,W}\|^p_2 \lc\sum_j\|A_{k,W,j}\|^p_2.$
We  now sum in $k$ and obtain from
\eqref{fixedkestimate}
\begin{multline*}
\Big(\sum_k\Big\|
\sum_{\substack{W:\\
L(W)=-k+\ell}}
\int_{2^{\ell}}^\infty
\fV_{k,s,W}
ds \,
\Big\|_{L^{p,\nu} }^p\Big)^{1/p}
\\
\lc \, \fC_{p,\nu} 2^{-\ell\alpha}
\Big(\sum_k\sum_j\sum_{\substack{W\in \cW_j:\\
L(W)=-k+\ell}}\meas(W)^{1-p/2}
\big\|A_{k,W,j}\big\|_2^p\Big)^{1/p}\,.
\end{multline*}
Finally, using
 part (ii) of
Lemma \ref{atomsL2bd},  we get
\begin{align*}
&
\Big(\sum_k\sum_j\sum_{\substack{W\in \cW_j:\\
L(W)=-k+\ell}}\meas(W)^{1-p/2}\big\|A_{k,W,j}\big\|_2^p\Big)^{1/p}\,
\\
&\lc
\Big(\sum_j\sum_{W\in \cW_j}\meas(W)\big\|A_{\ell-L(W),W,j}
\big\|_\infty^p\Big)^{1/p}\,
\\
&\lc
\Big(\sum_j \meas(\Omega_j)\,2^{jp}\Big)^{1/p}\, \lc \,
\|\fS f\|_p\,.
\end{align*}
This finishes the proof of \eqref{longrsumjelllp}.

\qed

\section{Proof of Theorem \ref{dLconverse}}\label{pfdLconverse}
By the remarks in the introduction (following the statement of Corollary
\ref{dLconversecor}) it only remains to be shown that (iv) implies (i)
and (ii).
These implications quickly follow from Theorem \ref{maintheorem}.

For the  implication (iv)$\implies$(i) 
we show, for the choices $b=1$ and $b=\sqrt 2$  that the multiplier 
\Be\label{splittingintervals}
m(\xi,\tau)= \sum_k \indicator{[2^k, 2^{k}\sqrt 2)} (\tau)
\gamma_k\big(2^{-k}(|\xi|-b\tau))\Ee
defines an operator which is bounded from $L^p(\bbR^{d+1})$ to
$L^{p,\nu}(\bbR^{d+1})$. The choice $b=\sqrt 2$ and scaling in $\tau$
then also covers the multiplier 
$$m(\xi,\tau)= \sum_k \indicator{[2^{k}\sqrt 2, 2^{k+1})}(\tau) 
\gamma_k\big(2^{-k}(|\xi|-\tau))$$ and the assertion follows.

For the proof of  \eqref{splittingintervals}
pick a smooth function $\chi_2$ which is equal to one on $[1,\sqrt 2]$
and supported in $(9/10, 3/2)$. Recall that $\ga_k$ is supported in
$(-1/4,1/4)$and pick a smooth function $\chi_1$ which is equal to one
on $(\frac{9}{10} b-\frac 14, \frac 32 b+\frac 14)$ and supported on 
$(\frac{b}{2}, 2b)$. Observe that, with these definitions
\Be\label{factoring}
m(\xi,\tau)= \chi_E(\tau)
\sum_k \chi_2(2^{-k}\tau) \chi_1(2^{-k}|\xi|)
\gamma_k\big(2^{-k}(|\xi|-b\tau))
\Ee 
where $E=\bigcup_{k\in \bbZ} [2^k, 2^{k+\frac 12})$. By the Marcinkiewicz
  multiplier theorem the convolution with multiplier $\chi_E(\tau)$ is bounded
on $L^{p,\nu}(\bbR^{d+1})$ for all $1<p<\infty$, $0<\nu\le \infty$.
Therefore it suffices to prove that under condition
\eqref{gammakhat} the multiplier
\Be \label{mdef} m(\xi,\tau) = \sum_k \gamma_k\big(2^{-k}(|\xi|-\tau))
\chi_2(2^{-k}\tau)\chi_1(2^{-k}|\xi|)
\Ee
defines a convolution which maps $L^p(\Bbb R^{d+1})$ to
$L^{p,\nu}(\bbR^{d+1})$.
But this follows immediately from Theorem \ref{maintheorem}, with the choice
of $\Gamma_k=\gamma_k$, and $b_k=b$ ($=1$ or $\sqrt 2$)  for all $k\in \bbZ$.

Next, for the implication (iv)$\implies$(ii) we first note that since
$\tau_k\in [2^k, 2^{k+1}]$ the term $\gamma(2^{-k}(|\xi|-\tau_k))$
vanishes for $|\xi|\notin (\frac 34 2^k, \frac 94 2^k)$. 
Now choose $\chi_1$ so that $\chi_1$ is supported in $(1/2, 5/2)$ and
equal to one on $(3/4, 9/4)$. 
Then $$\cF_d[\sum_k\alpha_k T^{\tau_k} \!f](\xi)=
\sum_k \alpha_k \gamma_k(2^{-k}|\xi|- 2^{-k}\tau_k)\chi_1(2^{-k}|\xi|) \F_d[f](\xi)\,.$$
Now let  $\chi $ be smooth and compactly
supported in $(-4,4)$. We claim that the  multiplier transformation with Fourier muliplier
\Be \label{secondappl}
M(\xi,\tau)=\sum_k \gamma_k\big(2^{-k}(|\xi|-\tau_k))
\chi_1(2^{-k}|\xi|)\,\chi(2^{-k}\tau)
\Ee
maps $L^p(\bbR^{d+1})$ to $L^{p,\nu}(\bbR^{d+1})$.  To see this we apply
 Theorem \ref{maintheorem} with  
$\Gamma_k(s)=\alpha_k \gamma_k (s-  2^{-k}\tau_k)$ and $b_k=0$ for all
$k\in \bbZ$. The condition  \eqref{gammakhat} for
$\widehat{\gamma_k}$ is obviously equivalent with the condition
 \eqref{Cpnu}
for $\widehat \gga_k$.

Now in \eqref{secondappl}  $\chi(\tau)$ may be chosen 
so that  $\chi(0)=1 $.
With this choice it follows  by de Leeuw's theorem (Lemma
\ref{leeuwlemma}) that $\sum_k \alpha_k T^{\tau_k}$ maps $L^p(\bbR^d)$
to $L^{p,\nu}(\bbR^d)$. \qed

\section{The cone multiplier}\label{conemultwt}

\begin{proof}[Proof of Corollary \ref{wtbr}]
It suffices to consider the multiplier $\rho_\lambda(\xi,\tau) \chi_{(0,\infty)}(\tau)$.
We split for $\tau>0$
$$\rho_\lambda(\xi,\tau)= \sum_{k\in \bbZ} \indicator{[2^k,
  2^{k+1})}(\tau)
\frac{2^{k\la}(\tau+|\xi|)^\la}{\tau^{2\la}}
\Big(\frac{\tau-|\xi|}{2^k}\Big)_+^\la.$$
Now let $b\in C^\infty_c(\bbR)$ be supported in $(-1/4,4)$ and satisfy $b(s)=1$
for $|s|\le 1/8$.
We can then write
$$
\rho_\lambda(\xi,\tau)\chi_{(0,\infty)}(\tau)=
a_\lambda(\xi,\tau)
\sum_{k\in \bbZ}\indicator{[2^k,
  2^{k+1})}(\tau)  
\gamma\big(\frac{|\xi|-\tau}{2^k}\big)
+ \widetilde a_\lambda(\xi,\tau)
$$
where
\begin{align*}
a_\lambda(\xi,\tau)&= \sum_{k\in \bbZ}\indicator{[2^k,
  2^{k+1})}(\tau) 
\Big( \frac{2^k(\tau+|\xi|)}{\tau^2}\Big)^\la
b\big(\frac{|\xi|-\tau}{2^k}\big) \\
\widetilde a_\la(\xi,\tau)&=
\sum_{k\in \bbZ}\indicator{[2^k,
  2^{k+1})}(\tau) 
\Big(1-\frac{|\xi|^2}{\tau^2}\Big)_+^\la
\big(1- b\big(\frac{|\xi|-\tau}{2^k}\big)\big) 
 \end{align*}
and
$$\gamma(u)= \begin{cases} (-u)^\la b(u) &\text{ for } u<0
\\
0 &\text{ for } u>0
\end{cases}\,.
$$

The multipliers $a_\la$ and $\widetilde a_\la$ are treated by the Marcinkiewicz multiplier theorem. The associated convolution operators are thus bounded on $L^{p,\nu}$ for all $1<p<\infty$, $0<\nu\le\infty$.
Therefore the corollary follows if we can show that the convolution operator with
multiplier
$$ \sum_{k\in \bbZ} \indicator{[2^k,
  2^{k+1})}(\tau) \gamma(2^{-k}(|\xi|-\tau))
$$
maps $L^p(\bbR^{d+1})$ boundedly to
$L^{p,\infty}(\bbR^{d+1})$. By  Theorem \ref{dLconverse} this is the case if
  \[
\biggl\|\frac{\widehat \gamma}
 { (1+|\cdot|)^{\frac{d-1}2}}
\biggl\|_{L^{p,\infty}(\bbR,
   (1+|\cdot|)^{d-1}dr)}<\infty.
\]
But
$$|\widehat \gamma(s)| \le C(1+|s|)^{-\la-1}
$$
and it is easy to check
that
$(1+|\cdot|)^{-\la-1-\frac{d-1}{2}}$ belongs to $L^{p,\infty}(\bbR,
   (1+|r|)^{d-1}dr)$
if and only if $\la\ge d/p-(d+1)/2$.
\end{proof}

\medskip

\end{document}